\documentclass[11pt,leqno]{amsart}

\usepackage[utf8x]{inputenc}
\usepackage[T1]{fontenc}
\usepackage[english]{babel}
\usepackage{newunicodechar}
\newunicodechar{̆}{\u{}}
\usepackage{enumitem}
\usepackage[dvipsnames]{xcolor}
\usepackage{amsthm}
\usepackage{amsmath,amssymb}
\usepackage{mathdots}

\usepackage{tikz}
\usepackage{tikz-cd}
\usetikzlibrary{arrows, positioning, calc, patterns}

\usepackage{bbm}
 \usepackage{todonotes}

\newtheorem{theorem}{Theorem}[section]
\newtheorem{lemma}[theorem]{Lemma}
\newtheorem{proposition}[theorem]{Proposition}
\newtheorem{corollary}[theorem]{Corollary}

\theoremstyle{definition}
\newtheorem{definition}[theorem]{Definition}

\newtheorem{problem}[theorem]{Problem}

\theoremstyle{remark}

\numberwithin{equation}{section}

\DeclareMathOperator{\Z}{\mathbb Z}
\DeclareMathOperator{\Q}{\mathbb Q}

\DeclareMathOperator{\N}{\mathbb N}
\DeclareMathOperator{\Stab}{Stab}

\DeclareMathOperator{\diag}{diag}

\newcommand{\F}{\mathbb{F}}

\DeclareMathOperator{\Irr}{Irr}
\DeclareMathOperator{\Ind}{Ind}
\DeclareMathOperator{\Inf}{Inf}
\DeclareMathOperator{\Res}{Res}
\DeclareMathOperator{\Ext}{Ext}
\DeclareMathOperator{\idrep}{\mathbbm{1}}

\DeclareMathOperator{\p}{\mathfrak{p}}
\DeclareMathOperator{\Op}{\mathcal{O}}

\DeclareMathOperator{\Pfaff}{Pfaff}

\DeclareMathOperator{\Sym}{Sym}
\DeclareMathOperator{\SL}{SL}

\DeclareMathOperator{\GL}{GL}
\DeclareMathOperator{\GA}{GA}
\DeclareMathOperator{\Com}{Com}

\usepackage[refpage,intoc,notocbasic]{nomencl}
\usepackage{textcase} 
\usepackage{xpatch}

\usepackage[pagebackref, colorlinks=true, linkcolor=MidnightBlue, citecolor=OliveGreen]{hyperref}
\usepackage[capitalise,nameinlink]{cleveref}

\makeatletter
\@namedef{subjclassname@2020}{\textup{2020} Mathematics Subject Classification}
\makeatother

\renewcommand*{\backref}[1]{}
\renewcommand*{\backrefalt}[4]{%
  \ifcase #1 %
    No citations.
  \or
    (cited on page~#2).%
  \else
    (cited on pages~#2).%
  \fi%
}

\title[Zeta functions of semi-direct products]{Representation zeta functions\\ of split extensions of~$\SL_2^m(\Op)$}

\author[J. M. Petschick]{J. Moritz Petschick}
\address{J. Moritz Petschick: Fakult\"at für Mathematik, Universität Bielefeld, D-33501 Bielefeld, Germany}
\email{jpetschick@math.uni-bielefeld.de}

\author[M. Piccolo]{Margherita Piccolo} 
\address{Margherita Piccolo:
  Mathematisches Institut, FernUniversit\"at in Hagen, 58097 
  Hagen, Germany} \email{margherita.piccolo@fernuni-hagen.de}

\thanks{
  The first author was supported by the Deutsche Forschungsgemeinschaft (DFG, German Research Foundation) – 380258175 and SFB-TRR 358/1 2023 – 491392403.
  The second author was partially funded by the research training group GRK2240: Algebro-geometric Methods in Algebra, Arithmetic and Topology of the DFG
  and is a member of GNSAGA (INdAM) and kindly acknowledges their support.
}

\keywords{Representation growth,~$p$-adic analytic groups,~$\mathfrak{p}$-adic integration, Dirichlet convolution}
\subjclass[2010]{Primary 22E50, 11M41; Secondary 20F69, 20C15}

\date{\today}

\begin{document}

\begin{abstract}
    We consider the representation growth of split extensions of~$\operatorname{SL}_2^m(\Op)$. We prove that the corresponding representation zeta functions factor as a product of the representation zeta function of~$\operatorname{SL}_2^m(\Op)$ and the relative representation zeta function associated to the extension. We make use of our result by computing the zeta functions for two infinite families of split extensions of~$\SL_2^m(\Op)$ explicitly. Along the way, we compute the representation zeta functions of a large class of subgroups of~$\SL_2^m(\Op)$.
\end{abstract}

\maketitle

\section{Introduction}

Let $G$ be a topological group, and let $r_n(G)$ be the cardinality of the set of isomorphism classes of $n$-dimensional continuous complex representations of $G$. The group $G$ is said to be \emph{representation rigid}, if $r_n(G)$ is finite for every~$n$. Within the class of finitely generated profinite groups, representation rigidity is characterised by the property usually called `FAb', which a group satisfies if every of its open subgroups has finite abelianisation; see~\cite[Proposition~2]{BLMM02} for further details. If the group $G$ has \emph{polynomial representation growth}, i.e.\ if the function $N \mapsto R_N(G) = \sum_{n=1}^N r_n(G)$ polynomially bounded, one defines the \emph{representation zeta function of $G$} as the Dirichlet generating function
\[
    \zeta_G(s) = \sum_{n = 1}^\infty r_n(G)n^{-s}
\]
defined on a left half-plane, such that its abscissa of convergence~$\alpha(G)$ coincides with the polynomial degree of growth of the sequence~$(R_N(G))_{N\in \N}$. These zeta functions and variations have been studied for a number of different classes of groups, see e.g.\ \cite{AKOV13,Klo13,LL08,SV14,OPS25,Wit91} and the references therein.

The representation zeta function of a FAb compact $p$-adic analytic groups are `almost' rational functions, as Jaikin-Zapirain~\cite{Jai05} established, for odd primes, that there exist positive integers $n_1, \dots, n_r$ and rational functions~$f_1,\dots,f_r\in \mathbb{Q}(t)$ such that $\zeta_G(s) =\sum_{i=1}^r n_i^{-s}f_i(p^{-s})$. This statement has been extended to the prime $2$ by Stasinski and Zordan~\cite{SZ20}. Recently, Kionke and Klopsch~\cite{KK19} considered the more general problem of counting not \emph{all} irreducible representations of a group $G$, but all irreducible constituents in a given representation $\varphi$ of $G$, counted by their multiplicity; giving rise to the \emph{zeta function associated to $\varphi$}. Under some natural restrictions on $\varphi$, much of the theory on representation zeta functions naturally generalises to this more flexible setting. We make heavy use of their results throughout the article.

Although the underlying techniques for the results above—based on the Kirillov orbit method, $p$-adic integration and relative Clifford theory—give a `recipe' for the computation of the representation zeta function, it is challenging obtain explicit formulae, which are known in just a handful of cases, all of them dealing with groups of small dimension and trivial soluble radical; see \cite{AKOV12, AKOV13, Jai05, Zor15}. Even the determination of the abscissae of convergence remains an open problem in general, though some bounds are known, cf.~\cite{AA16, AKOV12, Bud21, LL08}.

In contrast to previous examples, we consider compact $p$-adic analytic groups of arbitrary large dimension whose soluble radical is infinite. Concretely, we consider split extensions of principal congruence subgroups of the special linear group of degree~$2$ over a compact discrete valuation ring of characteristic $0$. We state our main result, deferring the precise definitions of some of the technical terms involved.

\begin{theorem}\label{thm:main}
    Let $\Op$ be a compact discrete valuation ring of characteristic $0$ and residue characteristic $p$. Let $\mathfrak{k}$ be a $3$-dimensional simple $\Op$-Lie lattice and let $\mathfrak{h}$ be an open $\Op$-Lie sublattice of $\mathfrak{k}$. Let $m$ be soundly permissible, $H = \exp\mathfrak{h}$, and let~$\sigma \colon H \to \GL_n^m(\Op)$ be a faithful finite-dimensional $\Op$\nobreakdash-representation of~$H$. Assume furthermore that the semi-direct product~$G = H \ltimes_{\sigma} \Op^n$ is FAb.
    Then the representation zeta function of $G$ satisfies
    \[
    	\zeta_{G} (s) = \zeta_H(s)\cdot\zeta^{G}_{H}(s-1),
    \]
    where~$\zeta^{G}_{H}(s)$ is the zeta function associated to the representation~$\Ind_{H}^G(\idrep_H)$.
\end{theorem}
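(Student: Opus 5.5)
We use Clifford theory with respect to the abelian normal subgroup $N = \Op^n$, organised by $H$-orbits of characters of $N$. Write $\widehat N = \operatorname{Hom}_{\mathrm{cont}}(\Op^n,\mathbb{C}^\times)$; $H$ acts on it through the dual of $\sigma$. Since $G = H\ltimes N$ is split, every $\chi\in\widehat N$ extends to its inertia group $I_\chi = \Stab_H(\chi)\ltimes N$ by letting $\Stab_H(\chi)$ act trivially. Hence the irreducible characters of $G$ are parametrised by pairs $(\chi,\theta)$, where $\chi$ runs over $H$-orbit representatives in $\widehat N$ and $\theta\in\Irr(\Stab_H(\chi))$. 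The degree is $|H:\Stab_H(\chi)|\,\theta(1)$, which gives
\[
\zeta_G(s)=\sum_{\chi\in \widehat N/H} |H:H_\chi|^{-s}\,\zeta_{H_\chi}(s),\qquad H_\chi=\Stab_H(\chi).
\]
The permutation representation $\Ind_H^G(\idrep_H)$, restricted to $N$, is the regular-type representation on $\widehat N$. By Frobenius reciprocity its irreducible constituents are the $\Ind_{I_\chi}^G(\tilde\chi)$ with $\theta$ trivial, and these occur with multiplicity one. Therefore
\[
\zeta^G_H(s)=\sum_{\chi}|H:H_\chi|^{-s}.
\]
Comparing the two expressions, it suffices to prove
\[
\zeta_{H_\chi}(s)=|H:H_\chi|\,\zeta_H(s)\qquad\text{for every }\chi,
\]
since then $|H:H_\chi|^{-s}\zeta_{H_\chi}(s)=|H:H_\chi|^{-(s-1)}\zeta_H(s)$ and summing gives the theorem.

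Two key steps remain.

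\textbf{Step 1: the stabilisers are open subgroups of the same shape.} A character $\chi$ of $\Op^n$ factors through some $(\Op/\p^k)^n$. So $H_\chi$ contains the kernel of $H\to\GL_n^m(\Op/\p^k)$, and is therefore open in $H$. We would like $H_\chi=\exp\mathfrak h_\chi$ for an open $\Op$-Lie sublattice $\mathfrak h_\chi$ of $\mathfrak k$ (or of each simple factor, in the product case $m$). Soundly permissible $m$ presumably guarantees that $H$ is potent and saturable enough that open subgroups of this kind correspond to Lie sublattices. One point must be checked: for a product $H\le \SL_2^m$, the stabiliser should still lie in the class covered by the paper's computation of zeta functions of subgroups.

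\textbf{Step 2: the index formula.} For an open Lie sublattice $\mathfrak h'\subseteq\mathfrak h$ of a $3$-dimensional simple $\Op$-lattice in the permissible range, we need $\zeta_{\exp\mathfrak h'}(s)=|\mathfrak h:\mathfrak h'|\,\zeta_{\exp\mathfrak h}(s)$. I would obtain this from the Kirillov orbit method together with the $\p$-adic integral formula of Avni--Klopsch--Onn--Voll. The zeta function of a uniform/potent group $\exp\mathfrak g$ is expressed via counting co-adjoint orbits on $\widehat{\mathfrak g}$, weighted by orbit size. Passing to a sublattice of index $q^{d}$ scales the relevant Haar measure by the index. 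For $\mathfrak{sl}_2$, the orbit-size data (the elementary divisors of the commutator matrix) depend only on the $\mathfrak k$-structure, up to a shift that exactly accounts for the index. Equivalently, one can use the explicit formula proved earlier in the paper for zeta functions of the "large class of subgroups of $\SL_2^m(\Op)$". I would invoke that result directly, reading off that $\zeta_{H'}=|H:H'|\zeta_H$; this is the linearity in the index.

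\textbf{Main obstacle.} The hardest part is Step 2, together with the verification that every stabiliser $H_\chi$ genuinely falls into the class to which the index formula applies. The orbit method requires saturability, and this is where the soundly permissible hypothesis on $m$ should be used. I would first try to prove the formula for $\mathfrak h'=\p^j\mathfrak h$ by scaling, and then for general open sublattices by the paper's explicit computation.

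Convergence of all the sums is guaranteed by the FAb hypothesis and by the Kionke--Klopsch theory. This justifies rearranging the sums.
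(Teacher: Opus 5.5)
Your reduction coincides with the paper's proof and is correct. This covers the Mackey parametrisation by pairs $(\chi,\theta)$, the identity $\zeta_G(s)=\sum_\chi |H:H_\chi|^{-s}\zeta_{H_\chi}(s)$, the Frobenius reciprocity computation showing that the constituents of $\Ind_H^G(\idrep_H)$ are exactly the pairs with $\theta$ trivial, each with multiplicity one, and the resulting reduction to $\zeta_{H_\chi}(s)=|H:H_\chi|\,\zeta_H(s)$. However, the two steps you leave open carry the actual content, and the justification you sketch for Step~1 does not work.

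The integral formula, and hence the paper's subgroup corollary, applies to $H_\chi$ only once $H_\chi$ is known to be (uniformly) potent. Sound permissibility of $m$ does not give this. It is what makes the semi-direct sum $\p^m(\mathfrak h\oplus\Op^n)$ potent and says nothing about point stabilisers, and open subgroups of a uniformly potent group need not be potent. What is missing is an argument specific to stabilisers. The paper supplies it in four steps:
\begin{enumerate}
\item It moves $\chi$ by $\GL_n(\Op)$ to the form $(z,1,\dots,1)$.
\item It computes the stabiliser $S$ in $\GL_n^m(\Op)$ explicitly as $\GA^m_{n-1}(\Op)\cdot T^k$, with $T^k$ abelian.
\item It shows $S$ is powerful, using $[ga,hb]=[g,b]^a[g,h]^a[a,h]^b$ and the powerfulness of $\GA^m_{n-1}(\Op)$.
\item It obtains potency of $H_\chi=S\cap H$ from $\gamma_{p-1}(S\cap H)\le S^p\cap H^p=(S\cap H)^p$, using uniqueness of $p$-th roots in the ambient group.
\end{enumerate}

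For Step~2, citing the paper's subgroup result is legitimate once Step~1 is in place, but your own sketch does not prove it. The claim that ``the orbit-size data agree up to a shift'' is not an argument. Reducing to $\p^j\mathfrak h$ by scaling does not reach the stabilisers either, since these are typically not scalar multiples of $\mathfrak h$; already for the natural module one gets lattices like $\langle p^k h, e, p^k f\rangle$.

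The real reason is special to dimension~$3$. Since $\mathfrak k_K$ is perfect and $\dim\wedge^2\mathfrak k_K=3$, the bracket $\beta\colon\wedge^2\mathfrak k_K\to\mathfrak k_K$ is a linear isomorphism. So for any $K$-linear $\xi$ carrying $\mathfrak h$ onto $\mathfrak h_\chi$, the map $\psi=\beta(\xi\wedge\xi)\beta^{-1}$ satisfies $[\xi x,\xi y]=\psi[x,y]$. Hence the commutator matrix of $\mathfrak h_\chi$ in the basis $\xi(\mathfrak X)$ is that of $\mathfrak h$ after a linear substitution. The change of variables in the Pfaffian integral then yields the factor $|\det(\psi\xi^{-1})|_{\p}^{-1}=|\det\xi|_{\p}^{-1}=|H:H_\chi|$, because $\det(\xi\wedge\xi)=(\det\xi)^2$.

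Some such dimension-specific input is unavoidable, since the analogous statement fails in dimension~$5$: the paper exhibits non-thetyspectral open potent subgroups of $\SL_2^1(\Op)\ltimes\Op^2$. Finally, $\SL_2^m(\Op)$ and $\GL_n^m(\Op)$ denote $m$-th principal congruence subgroups, not direct powers, so there is no ``product case'' to check.
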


This product decomposition enables us to compute explicit formulae for infinitely many split extensions of potent subgroups of $\SL_2(\Op)$.  
However, our examples show that such decomposition cannot always be expected in a more general setting, indicating that this behaviour is not universal.

Representation zeta functions of compact~$p$-adic analytic groups and of complex Lie groups appear as local factors in an Euler product decomposition of the representation zeta functions associated to ($S$-)arithmetic groups with the congruence subgroup property, see~\cite{LL08}. 
Special linear groups~$\SL_2({\scriptstyle\mathcal{O}})$ over the ring of integers of an algebraic number field~$\mathcal{K}$ do not have the congruence subgroup property; however, the congruence subgroup property holds as soon as we consider the $S$-arithmetic group~$\SL_2({\scriptstyle\mathcal{O}}_S)$ for a non-empty finite subset $S$ of the set of valuations of the number field~$\mathcal{K}$ with ring of integers~$\scriptstyle\mathcal{O}$, containing the Archimedean valuations and at least one non-Archimedean valuation, cf.\ \cite[$\S$ 9.5]{PR94}. In particular, for such $S$, we have
\[
    \zeta_{\SL_2(\mathbb{Z}_S)}(s)=\zeta_{\SL_2(\mathbb{C})}(s)\times \prod_{p\notin S}\zeta_{\SL_2(\mathbb{Z}_p)}(s).
\]
The group $\SL_2(\mathbb{C})$ has a unique irreducible rational representation of each degree, its representation zeta function (in this context also called the Witten zeta function) coincides with the Riemann zeta function. Indeed, every irreducible rational representation of~$\SL_2(\mathbb{C})$ is isomorphic to a symmetric power $\Sym^n(\mathbb{C}^2)$ of the standard representation; see, e.g.~\cite[Chapter~11]{FH91}. Inspired by the complex Lie group case, we consider the $2$- and $3$-dimensional representations of $\SL_2^m(\Op)$, where $\Op$ is a compact discrete valuation ring of characteristic~$0$, given by the natural action on~$\Op^2$, and by its symmetric square, respectively.

The integral analogues of split extensions of $\SL_2(\mathbb{Z})$ are of considerable interest themselves. The (integral) special affine group~$\SL_2(\mathbb{Z})\ltimes \mathbb{Z}^2$ served as a key ingredient in Kazhdan's proof establishing property~(T) for $\SL_3(K)$ for a local field $K$, cf.\ \cite{Kaz67,Bur91,dlHV89}. 
Recently, Zhang \cite{Zha24} proved that for a very general class of rings including global number rings, all groups of the form $\SL_2(R)\ltimes R^n$ have relative property (T), where the action of $\SL_2(R)$ on $R^n$ is defined via an irreducible representation of $\SL_2(\mathbb{C})$. Split extensions of this kind have also appeared in the construction of expander graphs, see \cite{Mar73,Lub94}.

One aim of our work is to produce explicit examples of $p$-adic analytic groups with arbitrary large degree of representation growth. To achieve this, we consider not only the natural representation of $\SL_2(\Op)$ and its symmetric square, but direct powers of these representations.

\begin{theorem}\label{thm:diagonal on natural module}
    Let~$\Op$ be a compact discrete valuation ring of characteristic~$0$, residue characteristic an odd prime~$p$, and residue field cardinality~$q$.
	For all $m \in \N$ that are soundly permissible for $\mathfrak{sl}_2(\Op)$ and all $n \in \N_+$, the representation zeta function of the semi-direct product of the principal congruence subgroup $\SL_2^m(\Op)$ and $\Op^{2n}$ with respect to the $n$\textsuperscript{th} power of the natural action is
	\[
	    \zeta_{\SL_2^m(\Op) \ltimes \Op^{2n}}(s) = \zeta_{\SL_2^m(\Op)}(s)\cdot q^{2nm}\frac{(1+q^{-s})f(q,q^{-s})}{(1-q^{n+1-2s})(1-q^{2n-3s})},
	\]
	where
	\[
		f(q,t) = 1-t+(q^{n-1}-1)qt^2+(1+t)q^{n+1}t^3.
	\]
\end{theorem}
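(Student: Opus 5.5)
By \cref{thm:main}, applied with $\mathfrak{k}=\mathfrak{h}=\mathfrak{sl}_2(\Op)$ (for $\Op$-lattice purposes we take $\mathfrak{h}=\mathfrak{sl}_2^m(\Op)=\p^m\mathfrak{sl}_2(\Op)$), $H=\SL_2^m(\Op)$ and $\sigma$ the $n$-fold direct sum of the natural representation, we get $\zeta_G(s)=\zeta_{\SL_2^m(\Op)}(s)\,\zeta^G_H(s-1)$. Before using this we must check the hypotheses. $\sigma$ is faithful, and $G$ is FAb because $[H,\Op^{2n}]$ is open in $\Op^{2n}$. It then remains to show
\[
\zeta^G_H(s-1)=q^{2nm}\frac{(1+q^{-s})f(q,q^{-s})}{(1-q^{n+1-2s})(1-q^{2n-3s})}.
\]
To compute $\zeta^G_H$, write $A=\Op^{2n}$, so that $\Ind_H^G(\idrep_H)$ is the space of (locally constant) functions on $G/H\cong A$. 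Here $A$ acts by translation and $H$ by $\sigma$. Decomposing along the characters $\chi\in\widehat A$ shows the following. Each $H$-orbit $\Omega\subseteq\widehat A$ contributes exactly one irreducible constituent, namely $\Ind_{H_\chi A}^G(\idrep\otimes\chi)$. It occurs with multiplicity one and has degree $|\Omega|$. Hence $\zeta^G_H(s)=\sum_\Omega|\Omega|^{-s}$, and therefore
\[
\zeta^G_H(s-1)=\sum_{\chi\in\widehat A}|H:H_\chi|^{-s}.
\]
We identify $\widehat A$ with $2\times n$ matrices over $K/\Op$ (with $H$ acting on the left, via the contragredient, which for $\SL_2$ is equivalent). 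We then stratify $\chi$ by its level $k$ and by the elementary divisor type $(1,\pi^j)$, $0\le j\le k$, of the corresponding matrix $X\in M_{2\times n}(\Op/\p^k)$. The case $j=k$ is the rank-one case, and $k=0$ is the trivial character.

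Next we compute stabiliser indices. Since $m$ is soundly permissible, $H_\chi=\exp(\mathfrak{h}_\chi)$ with $\mathfrak{h}_\chi=\{Y\in\p^m\mathfrak{sl}_2(\Op): YX\equiv0 \bmod \p^k\}$, so $|H:H_\chi|=|\mathfrak h:\mathfrak h_\chi|$. The action of $\GL_2(\Op)$ normalises $\mathfrak{sl}_2^m(\Op)$, and the condition only depends on the column lattice of $X$. We may therefore assume $X$ has column span generated by $e_1$ and $\pi^je_2$. Writing $Y=\pi^m\bigl(\begin{smallmatrix}a&b\\c&-a\end{smallmatrix}\bigr)$, the conditions become $v(a),v(c)\ge k-m$ and $v(b)\ge k-m-j$. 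This gives
\[
|H:H_\chi|=q^{2\max(k-m,0)+\max(k-m-j,0)}.
\]
In parallel we count the matrices $X$ of each type $(k,j)$. This is a standard count of $2\times n$ matrices over $\Op/\p^k$ with prescribed Smith form. It is of the shape $q^{(\cdot)}(1-q^{-n})(1-q^{1-n})$-type factors for $0<j<k$, with separate formulae for $j=0$ and for the rank-one stratum $j=k$.

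Finally, we sum $\sum_{k,j}\#\{X\text{ of type }(k,j)\}\cdot q^{-s(2(k-m)+(k-m-j))}$. All characters of level $k\le m$ have trivial orbit; there are $q^{2nm}$ of them, which produces the prefactor $q^{2nm}$. After the substitution $k=m+\ell$, the cap $\max(\ell-j,0)$ splits the sum into the regions $j\ge \ell$ and $j<\ell$. Summing the geometric series in $\ell$ and $j$, the direction $(\ell,j)$ with $j=\ell$ produces $1-q^{n+1-2s}$ (from growth $q^{n+1}$ against exponent $2s$), and the direction with $j$ fixed produces $1-q^{2n-3s}$. The main obstacle is this bookkeeping: getting the boundary strata exactly right (rank one, $j=0$, and $j$ versus $\ell$ near the cap) so that the numerator collapses to $(1+q^{-s})f(q,q^{-s})$. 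I would check it first for $n=1$ by hand, and then verify the general numerator by comparing coefficients of small powers of $q^{-s}$.
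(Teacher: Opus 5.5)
\textbf{The reduction is sound, but the last step cannot be completed.} Your method is correct, and it is a genuinely different and more elementary route than the paper's. Instead of evaluating the $p$-adic integral over the minors of $\Com(\mathfrak{X},\mathfrak{Y})$, you count characters by level $m+\ell$ and Smith type $(1,\pi^j)$. Your index $|H:H_\chi|=q^{2\ell+\max(\ell-j,0)}$ is exactly the value of $\lVert \operatorname{Minor}(\Com(\mathfrak{X},\mathfrak{Y})(w))\rVert_{\p}$ on the corresponding region of the paper's integral, so both routes evaluate the same sum.

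The gap is your final step. The sum does \emph{not} collapse to $(1+q^{-s})f(q,q^{-s})$, and no bookkeeping will make it do so. Put $c_n=(1-q^{-n})(1-q^{1-n})$. Among characters of level $m+\ell$:
\begin{itemize}
\item those with all $2\times2$ minors in $\p^\ell$ number $q^{2nm}(q+1)q^{\ell-1}q^{\ell n}(1-q^{-n})$;
\item those of exact type $1\le j<\ell$ number $q^{2nm}(q+1)q^{j-1}q^{(2\ell-j)n}c_n$;
\item those of type $j=0$ number $q^{2nm}q^{2\ell n}c_n$.
\end{itemize}
Summing, with the substitution $\ell=i+j$ in the middle range and $t=q^{-s}$, gives
\begin{align*}
q^{-2nm}\zeta^{G}_{H}(s-1)&=1+\frac{(q+1)(q^n-1)t^2}{1-q^{n+1}t^2}+\frac{(q^n-1)(q^n-q)(1+q^nt^2)\,t^3}{(1-q^{n+1}t^2)(1-q^{2n}t^3)}\\
&=\frac{1+(q^n-q-1)t^2-(q^{n+1}+q^n-q)t^3+q^{n+1}t^5}{(1-q^{n+1}t^2)(1-q^{2n}t^3)}.
\end{align*}
For $n\geq 2$ this numerator is not even divisible by $1+t$: its value at $t=-1$ is $2(q^n-q)$.

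\textbf{A quick test shows this is not a slip on your side.} By your index formula, $|H:H_\chi|=q^3$ forces $\ell=1$ and $j=0$. So the $t^3$-coefficient counts level-$(m+1)$ characters whose reduction modulo $\p$ has rank $2$, namely $q^{2nm}(q^n-1)(q^n-q)$. The stated right-hand side instead gives $q^{2nm}(q^{2n}+q^{n+1}+q^n-q)$.

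In particular, the $n=1$ check you propose fails. There only the rank-one stratum exists, so $\zeta^{G}_{H}(s-1)=q^{2m}(1-q^{-2s})/(1-q^{2-2s})$. The stated formula, however, has $q^{-3s}$-coefficient $2q^{2m+2}\neq 0$. The closed form therefore cannot be proved as written.

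For comparison, the paper's intermediate value of $\sum_j(q^{n-1}t)^{2j}A(j)$ agrees with your count. The last display of its proof, however, has a sign error in the $t^3$-coefficient; it should read $-(q^n+q^{n-1}-1)q\,t^3$. The factorisation $(1+q^{-s})f(q,q^{-s})$ in the statement matches neither that display nor the correct value. Your argument should conclude with the formula displayed above.
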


The representation zeta function $\zeta_{\SL_2^m(\Op)}(s)$ is explicitly computed in~\cite[Theorem~1.2]{AKOV10}.
Note that the abscissae of convergence of the groups in \cref{thm:diagonal on natural module} depend on which of the two uniformly varying factors in the denominator dominates, and this varies with $n$. For small values ($n \leqslant 3$), the abscissa equals $\frac{n+1}{2}$, yielding the specific values $1$, $3/2$, and $2$ for $n = 1, 2, 3$ respectively. In contrast, for larger values ($n \geqslant 3$), the abscissa is given by $\frac{2n}{3}$.

A notable feature is that the abscissa of convergence grows without bound as $n$ increases. These groups, together with those appearing in the subsequent theorem, constitute the first known examples of a family of $p$-adic analytic groups exhibiting unbounded polynomial representation growth.

\begin{theorem}\label{thm:diagonal on symmetric square}
    Let~$\Op$, $p$, and $q$ be as above.
	For all $m \in \N$ that are soundly permissible for~$\mathfrak{sl}_2(\Op)$ and all $n \in \N_+$, the representation zeta function of the semi-direct product of the principal congruence subgroup $\SL_2^m(\Op)$ and $\Op^{3n}$ with respect to the $n$\textsuperscript{th} direct power of the symmetric square of the natural action is
	\[
	    \zeta_{\SL_2^m(\Op) \ltimes (\Sym^2(\Op^{2}))^n}(s) = \zeta_{\SL_2^m(\Op)}(s)\cdot q^{3nm}\frac{(1-q^{-s})(q+q^{-s})f(q,q^{-s})}{(1-q^{n+2-2s})(1-q^{3(n-s)})} 
	\]
	where
	\[
		f(q,t) = (1+q^nt^2)(1+qt) + t(1+q^{n+1}t).
	\]
	If $p = 2$, $\Op = \Z_2$ and $n = 1$, we have
	\[
		\zeta_{\SL_2^m(\Z_2) \ltimes \Sym^2(\Z_2^{2})}(s) = 2^{3m+1}\frac{(1-2^{-s})(2^{3-s}+(1-2^{-s}))}{1-2^{3-2s}}\cdot \zeta_{\SL_2^m(\Z_2)}(s).
	\]
\end{theorem}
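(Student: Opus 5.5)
The plan is to reduce \cref{thm:diagonal on symmetric square} to \cref{thm:main} and then compute the relative zeta function $\zeta^G_H(s)$ with $H=\SL_2^m(\Op)$ and $V=(\Sym^2(\Op^2))^n$. We take $\sigma$ to be the $n$-fold diagonal symmetric-square representation, which is faithful on $H$ (for $p$ odd; the centre $\pm1$ lies outside $H$ for permissible $m$). First we check that $G=H\ltimes V$ is FAb: open subgroups contain $H'\ltimes \p^k V$, and $[H',V]$ is open in $V$, because $\mathfrak{sl}_2$ acts on $\Sym^2$ without invariants. \Cref{thm:main} then gives $\zeta_G(s)=\zeta_H(s)\zeta^G_H(s-1)$. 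It remains to show that $\zeta^G_H(s-1)$ equals the displayed rational function.

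To compute $\zeta^G_H$, we apply Clifford theory to the abelian normal subgroup $V$. The characters of $V$ correspond to $\widehat V\cong (\Sym^2(F/\Op)^\vee)^n$, that is, to $n$-tuples $\chi=(w_1,\dots,w_n)$ of elements of the dual module, which is again $\Sym^2\cong\mathfrak{sl}_2$ up to twist, with entries in $F/\Op$. The irreducible constituents of $\Ind_H^G\idrep$ are the induced characters $\Ind_{\Stab_H(\chi)\ltimes V}^G(\idrep\otimes\chi)$, one for each $H$-orbit of $\chi$, and each has degree $|H:\Stab_H(\chi)|$. Hence
\[
\zeta^G_H(s)=\sum_{\chi\in \widehat V/H}|H:\Stab_H(\chi)|^{-s}=\sum_{\chi\in\widehat V}|H:\Stab_H(\chi)|^{-s-1}.
\]
We then stratify $\chi$ by level $p^{-\ell}$, meaning the order of $\chi$. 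Viewing $\chi$ as an $n$-tuple of elements of $\mathfrak{sl}_2(\Op/\p^\ell)$ under the adjoint action, the stabiliser in $H$ is the centraliser of the span of the $w_i$. That span has rank at most $3$, and its elementary divisors, together with whether a rank-one part is split, non-split or nilpotent (the quadratic form $\det$), determine the index. Concretely, for a single regular element of depth $\ell$ the orbit size is of order $q^{2\ell}$ (compare \cite{AKOV10}); once the span contains two independent directions, the centraliser in $H$ is trivial modulo $\p^\ell$ and the index is about $q^{3\ell}$. This produces the two denominators: $1-q^{n+2-2s}$ comes from tuples whose span is essentially one-dimensional (about $q^{\ell(n-1)}\cdot q^{3\ell}$ such tuples, each with index $q^{2\ell}$), and $1-q^{3(n-s)}$ from generic tuples (about $q^{3n\ell}$ tuples with index $q^{3\ell}$). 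The factor $q^{3nm}$ arises from the shift by $m$ in the level, since $H$ is the $m$-th congruence subgroup and so acts trivially on characters of level at most $m$. Combined with the shift $s\mapsto s-1$, this yields the stated exponents. We would encode the count as a $\p$-adic integral over $\Op^{3n}$ of $\|$minors$\|$-type functions in the style of \cite{AKOV13} and evaluate it by a recursive decomposition on the first nonzero residue.

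The main obstacle is the intermediate stratum. Here the span of the $w_i$ has one direction at valuation $a$ and a second at a deeper valuation $b>a$, so the stabiliser index interpolates between $q^{2\ell}$ and $q^{3\ell}$. These terms, summed over the three types of the leading direction (split, non-split, nilpotent, contributing the factors $(1\pm q^{-1})$), must combine into the numerator $(1-q^{-s})(q+q^{-s})f(q,q^{-s})$. I would first carry out the sum over $b$ given $a$ as a geometric series and only then sum over $a$. For $p=2$, $\Op=\Z_2$, $n=1$, the same orbit count is done by hand, handling the non-étale quadratic form $\det$ on $\Sym^2$ at $2$ directly. In that case no generic stratum exists, which explains the single denominator $1-2^{3-2s}$.
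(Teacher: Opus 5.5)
You reduce to \cref{thm:main} exactly as the paper does, and your heuristic for the two denominators is right: about $q^{(n+2)\ell}$ essentially rank-one tuples of index $q^{2\ell}$, and about $q^{3n\ell}$ generic tuples of index $q^{3\ell}$. The gap is that the evaluation of $\zeta^G_H$, which is the whole content of the theorem, is never carried out, and the mechanism you propose for the numerator is mistaken.

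For $p$ odd every nonzero element of $\mathfrak{sl}_2(\F_q)$ is regular, so the split, non-split or nilpotent type of the leading direction plays no role. Suppose that, after the shift by $m$, the $w_i$ generate a submodule $M_\lambda\cong\Op/\p^{a_1}\oplus\Op/\p^{a_2}\oplus\Op/\p^{a_3}$ of $\mathfrak{sl}_2(K/\Op)$ with $a_1\geq a_2\geq a_3$. The brackets of any $\Op$-basis of $\mathfrak{sl}_2(\Op)$ again form a basis, so the stabiliser has index exactly $q^{2a_1+a_2}$. To complete your route you would need three things:
\begin{enumerate}
\item this index formula;
\item the number $N_n(\lambda)$ of $n$-tuples generating a submodule of type $\lambda$, namely the number of such submodules of $(K/\Op)^3$ times $|\mathrm{Surj}(\Op^n,M_\lambda)|$;
\item the evaluation of $q^{3nm}\sum_\lambda N_n(\lambda)\,q^{-(2a_1+a_2)(1+s)}$.
\end{enumerate}
The sum over $a_3$, which is invisible in the index, is where the $n$-dependence of the numerator comes from. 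Done this way, it would be a genuinely different, orbit-counting proof.

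The paper instead evaluates the integral of \cref{prop:integral formula-semidirect case}. On each shell $\pi^{-j}(\Op^{3n})^\times$ the squares contribute a constant and the remaining minors enter scaled by $\pi^{-j}$. The paper splits the shell by the first triple $(u_k,v_k,w_k)$ containing a unit and applies the unimodular substitution $\tilde r_i=\det(k,i;u,r)$. After that only monomials matter, and everything reduces to the integrals $B_{2n-1}(j,\cdot)$ from the proof of \cref{thm:diagonal on natural module}. Your sketch contains neither this substitution nor the orbit data above, so the `intermediate stratum' is left open.

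Moreover, no correct computation will produce the numerator you are aiming at. As $s\to\infty$, $\zeta^G_H(s-1)=\sum_{\Omega}|\Omega|^{1-s}$ (sum over $H$-orbits $\Omega$ in $\widehat V$) tends to the number of fixed characters. For $p$ odd that number is $|V:[H,V]|=q^{3nm}$. The displayed expression instead tends to $q^{3nm+1}$, because $f(q,0)=1$.

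Shifting the paper's own intermediate result $\zeta^{G_n}_H(s)=q^{3nm-2}(q-t)(1-t)(\cdots)/\bigl((1-q^{3n-3}t^3)(1-q^nt^2)\bigr)$ by $s\mapsto s-1$ gives the factor $(1-q^{-s})(1-q^{1-s})$ in place of $(1-q^{-s})(q+q^{-s})$. For $n=1$ this is $q^{3m}(1-q^{-2s})/(1-q^{3-2s})$, which your orbit count reproduces at once.

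Run the same low-order check at $p=2$ before trying to match a hand count.
\begin{itemize}
\item For the symmetric tensors, $[\mathfrak h,\mathfrak v]=2\Z_2u_1\oplus\Z_2v_1\oplus2\Z_2w_1$ has index $4$. This gives $2^{3m+2}$ fixed characters, not the $2^{3m+1}$ of the printed formula.
\item The paper's minors $\{1,2u^2,4v^2,2w^2\}$ are those of the dual (polynomial) module. Even there, a direct count gives $3\cdot2^{3m}$ orbits of size $2$, not the $6\cdot2^{3m}$ the printed formula predicts.
\end{itemize}
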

In particular,
\[
	\zeta_{\SL_2^m(\Op) \ltimes \Sym^2(\Op^{2})}(s) = \zeta_{\SL_2^m(\Op)}(s)\cdot q^{3m} \frac{(1-q^{-s})^2(q+q^{-s})}{(1-q^{3-2s})(1-q^{1-s})}.
\]
Again, the abscissae of the groups $\SL_2^m(\Op) \ltimes (\Sym^2(\Op^{2}))^n$ with varying $n$ is potentially governed by both factors in the denominator, as well as unbounded: it is~$3/2$ for~${n = 1}$ and~$n$ otherwise.

To achieve \cref{thm:main}, we need a good understanding of the representation zeta functions of subgroups of $\SL_2^1(\Op)$. We say that two groups are called \emph{thetyspectral} if their representation zeta functions differ by a constant factor. 
To obtain \cref{thm:main}, we prove---see \cref{cor:subgroups}---that all open potent subgroups of~$\SL_2^m(\Op)$ for permissible~$m$ are thetyspectral. In fact, we investigate a more general situation, see \cref{thm:subgroup formula}, using a `weak' analogue of Lie lattice isomorphisms. It is unclear how to characterise thetyspectral subgroups in general. We present a class of subgroups of~$\SL_3(\Op)$ of the desired kind in \cref{prop:thethys in sl3}; the method naturally generalises to cover other semi-simple compact $p$-adic analytic groups.

On the other hand, we prove that not all open potent subgroups of~$\SL_2(\Op)\ltimes \Op^2$ are thetyspectral by the explicit computation of the representation zeta function of a certain subgroup.
\begin{theorem}
    Let $p$ be an odd prime and let~$\Op$ be an unramified extension of~$\mathbb{Z}_p$. For~$k\in \mathbb{N}$, consider the group~$H_k = S_k \ltimes \Op^2$, where
    \[
        S_k= \left\{M\in \SL_2^1(\Op)\mid 
            M \equiv \begin{pmatrix}
		    1 & \ast \\
		    0 & 1
		\end{pmatrix} \bmod p^{k+1}\right\}.
    \]
    The groups $\SL_2^1(\Op) \ltimes \Op^2$ and $H_k$ are not thetyspectral. In particular,
    \[
        \zeta_{H_k}(s) = p^{2k+2}\frac{(1-p^{-s})((1-(p^{1-s})^{k+1}) + p^{1-2s}(1-p^{(1-s)(k-1)})}{(1-p^{1-s})^2(1+p^{1-s})}
		\cdot \zeta_{\SL_2^1(\Op)}(s).
    \]
\end{theorem}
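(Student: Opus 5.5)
We compute $\zeta_{H_k}$ by Clifford theory relative to the abelian normal subgroup $N=\Op^2$, in the spirit of \cref{thm:main}. Every irreducible representation of $H_k=S_k\ltimes N$ lies over an $S_k$-orbit of characters $\chi\in\widehat N\cong(\mathrm{Frac}(\Op)/\Op)^2$. Because the extension splits and $N$ is abelian, $\chi$ extends to its inertia group $I_\chi=\Stab_{S_k}(\chi)\ltimes N$. The irreducible representations over the orbit of $\chi$ are therefore the $\Ind_{I_\chi}^{H_k}(\tilde\chi\otimes\theta)$ with $\theta\in\Irr(\Stab_{S_k}(\chi))$. This gives
\[
\zeta_{H_k}(s)=\sum_{[\chi]\in \widehat N/S_k}|S_k:\Stab_{S_k}(\chi)|^{-s}\,\zeta_{\Stab_{S_k}(\chi)}(s).
\]
The trivial character contributes $\zeta_{S_k}(s)$.

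For $\chi\neq 1$ of level $p^{r}$, i.e. a primitive vector $v\in(\Op/p^r)^2$ up to the dual action, the stabiliser is a unipotent-type subgroup of $S_k$: it consists of the matrices fixing $v$ modulo $p^r$. Its representation zeta function has to be related to $\zeta_{\SL_2^1(\Op)}$. For the potent/open pieces I would use \cref{cor:subgroups}, which says that open potent subgroups of $\SL_2^m(\Op)$ are thetyspectral, together with the more general \cref{thm:subgroup formula}. These give $\zeta_{\Stab}(s)=c\cdot\zeta_{\SL_2^1(\Op)}(s)$ with $c$ an index-type constant; in particular $\zeta_{S_k}(s)=p^{2k}\zeta_{\SL_2^1(\Op)}(s)$, since $|\SL_2^1:S_k|=p^{2k}$ for unramified $\Op$. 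I expect the stabilisers to be infinite only in a limited sense: they are the stabilisers of characters of the \emph{infinite} group $N$, and each has finite index in $S_k$. Hence every summand is a constant multiple of $\zeta_{\SL_2^1(\Op)}(s)$, and the whole computation reduces to an orbit count weighted by $|S_k:\Stab|^{-s}$ and the thetyspectral constants.

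The orbit bookkeeping splits by the position of $v$ relative to the congruence condition defining $S_k$. Write $v=(a,b)$ in coordinates adapted to the upper unitriangular shape. Characters with $v_p(a)$ small are moved by the full $\SL_2^1$-type action. Characters pointing in the direction preserved by the unipotent radical modulo $p^{k+1}$ have larger stabilisers. Their orbits are governed by the depth $j\le k+1$ up to which the lower-left entry is forced to vanish. I would parametrise orbits by the level $r$ and by $j=\min(v_p(\cdot),k+1)$, compute the orbit sizes $p^{2r-2}$-type and stabiliser indices, and sum the resulting geometric series in $p^{1-s}$. The finite sum over $j\le k$ should produce the truncated factors $1-(p^{1-s})^{k+1}$ and $1-p^{(1-s)(k-1)}$, and the tail over $r$ should produce the denominators $(1-p^{1-s})^2(1+p^{1-s})$. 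The overall factor $p^{2k+2}$ combines the index $p^{2k}$ with the thetyspectral constants.

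The main obstacle is the uniform computation of the stabilisers and of their thetyspectral constants for all pairs $(r,j)$. One must check that each stabiliser satisfies the hypotheses of \cref{thm:subgroup formula}, namely that it is given by a Lie sublattice of the right form. This is the step where unramifiedness and $p$ odd are used. If some stabilisers are not potent, I would pass to a potent open normal subgroup and apply Clifford theory once more.

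Non-thetyspectrality then follows by comparison with the case $k=0$ (or with \cref{thm:diagonal on natural module} for $n=1$, $m=1$). Dividing by $\zeta_{\SL_2^1(\Op)}(s)$ gives a rational function in $p^{-s}$ whose dependence on $k$ is not just a constant factor, since its numerator degree grows with $k$.
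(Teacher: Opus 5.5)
\textbf{The reduction is fine.} Your Clifford-theory reduction is the paper's own. Applying \cref{cor:subgroups} to the stabilisers collapses your sum to $\zeta_{S_k}(s)\sum_{\chi\in\Irr(\Op^2)}|S_k\cdot\chi|^{-s}=\zeta_{S_k}(s)\,\zeta^{H_k}_{S_k}(s-1)$, which is \cref{thm:main}. The stabilisers are potent by \cref{lem:potent point stabs} and \cref{lem:potent intersection}, so the step you call the main obstacle is already settled.

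The only difference is that you would evaluate the relative factor by counting $S_k$-orbits instead of using the paper's five-region $\mathfrak{p}$-adic integral, and that route is workable. Write $q=|\Op/p\Op|$; the statement's $p$ should be read as $q$. Identify $\chi$ with $(a,b)\in(K/\Op)^2$ via $x\mapsto\psi(ax_1+bx_2)$, with $a\in p^{-A}\Op^\times$ and $b\in p^{-B}\Op^\times$, and $A=0$ meaning $a\in\Op$ (similarly for $B$). For non-trivial $\chi$ the orbit is a product of two cosets, of size:
\begin{itemize}
\item $q^{A-1+\max(A-k-1,0)}$ if $A\ge\max(B,1)$;
\item $q^{\max(A-1,0)+\max(B-k-1,0)}$ if $A<B\le A+k$;
\item $q^{2(B-k-1)}$ if $B\ge A+k+1$.
\end{itemize}
You never carry out this count. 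Everything after the reduction is ``should produce'', and that computation is the entire content of the statement. This is a genuine gap.

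\textbf{The computation does not give the displayed formula.} The constants are the first problem.
\begin{itemize}
\item The constant term of $\sum_\chi|S_k\cdot\chi|^{-s}$ is the number of $S_k$-fixed characters, namely $|\Op^2:[S_k,\Op^2]|=|\Op^2:p\Op\oplus p^{k+1}\Op|=q^{k+2}$.
\item You correctly have $|\SL_2^1(\Op):S_k|=q^{2k}$. So the overall constant is $q^{3k+2}$, not $q^{2k+2}$.
\item Independently, $r_1(H_k)=|H_k^{\mathrm{ab}}|=q^{2k+3}\cdot q^{k+2}$, which the displayed formula does not give.
\item The paper arrives at $2k+2$ only through its claim that $|\SL_2^1(\Op):S_k|=q^k$, which is incorrect.
\end{itemize}

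The sign is the second problem. The orbit count, and equally the sum of the paper's five area integrals (each of which is correct), gives the numerator $(1-(q^{1-s})^{k+1})-q^{1-2s}(1-q^{(1-s)(k-1)})$, with a \emph{minus} sign.
\begin{itemize}
\item For $k=2$ there are $2q^6-2q^5$ characters with orbit size $q^2$. By contrast, $q^{k+2}$ times the displayed fraction predicts $2q^6$.
\item At $k=0$ only the minus sign returns the correct factor $q^2(1-q^{-2s})/(1-q^{2-2s})$ for $\SL_2^1(\Op)\ltimes\Op^2$. The plus sign produces a negative coefficient.
\end{itemize}
So the ``should produce'' step is exactly where an honest computation departs from the target.

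\textbf{The non-thetyspectrality argument uses the wrong criterion.} You must show that $\zeta_{H_k}/\zeta_{H_0}$ is non-constant in $s$ for each fixed $k\ge1$. For example, for $k=1$ this quotient is $q^3(1+q^{1-s})/(1+q^{-s})$. Growth of the numerator degree in $k$ is not the relevant property.
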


\subsection*{Acknowledgements} Some findings of this paper were presented in the respective doctoral theses of the authors, both supervised by Benjamin Klopsch. The authors express their gratitude to him for suggesting the research topic, providing guidance, and offering feedback on earlier drafts.

\section{Preliminaries}

\subsection{General notation}
\label{sub:general_notation}
We write $\N$ for the set of positive integers and we put $\N_0=\N\cup \{0\}$. The set $\{1, \dots, n\}$ is denoted $[n]$.

\subsection{Representation theory} 
\label{sub:representation_theory}

All groups considered in this section are profinite. For a group $G$, denote by~$\Irr(G)$ the set of isomorphism classes of finite-dimensional irreducible continuous complex representations, and, for any $n \in \N$, by $\Irr_n(G)$ the subset of representations of dimension~$n$. There is a bijection between the isomorphism classes of irreducible representations of~$G$ and its irreducible complex characters. The trivial representation is denoted $\idrep_G$. A representation~$\sigma \colon G \to \GL(V)$ is called \emph{smooth} if the map~$G\times V\to V$ is continuous, where~$W$ is equipped with the discrete topology. Every smooth representation decomposes as a direct sum of smooth irreducible representations, and the smooth irreducible representations of~$G$ are precisely the finite-dimensional irreducible continuous representations of~$G$, see for example~\cite[Lemma 2.1]{KK19} and the references therein. In the following, all representations are finite-dimensional and smooth.

Given two representations~$\sigma \colon G \to \GL(V)$ and~$\varphi\colon G \to \GL(W)$ of~$G$, the \emph{tensor product}~$\sigma \otimes \varphi$ is the representation of~$G$ on~$V\otimes_\mathbb{C} W$ satisfying ${((\sigma\otimes \varphi)(g))(v \otimes w)} = \sigma(g).v \otimes \varphi(g).w$ for all $g\in G, v \in V$, and $w \in W$. The \emph{symmetric square} $\Sym^2(\sigma) \colon G \to \GL(\Sym^2(V))$ of a representation $\sigma \colon G \to \GL(V)$ is the restriction of the tensor product $\sigma \otimes \sigma$ to the subspace of symmetric tensors.

Let~$\mathcal{C}^\infty(G,V)$ denote the space of continuous functions from~$G$ to a~$\mathbb{C}$-vector space~$V$, equipped with the discrete topology. Given a closed subgroup~$H \leq G$ and a representation~$\sigma$ of~$H$, define $V_{\sigma} = \{ f\in \mathcal{C}^\infty(G,V) \mid \forall\, h\in H,\,\forall \, x \in G : f(hx) = \sigma(h).f(x)\}$. The \emph{induced representation}~$\Ind_H^G(\sigma) \colon G \to \GL(V_\sigma)$ of $\sigma$~on $G$ is defined by right translation $(\Ind_H^G(\sigma)(g))(f)(x) = f(xg)$. Following Kionke and Klopsch \cite[Section 2.3]{KK19}, we refer to the representation as `induced’ rather than `co-induced’, which is the terminology used by Serre in \cite[Section I 2.5]{Ser02}.

Given a group~$H$, a homomorphism~$f \colon G \to H$, and a representation~$\sigma$ of~$H$, the \emph{inflation of~$\sigma$ along~$f$} is the representation~$\Inf_H^{G, f}(\sigma) = f\sigma$ of $G$. If the choice of morphism is clear from the context, we shall omit related the superscript. We use the same notation for the characters of induced or inflated representations.

Let~$H \leq G$ be a closed subgroup, and let~$\sigma$ be a representation of~$H$. We say that~$\sigma$ is \emph{extendable} if there exists a representation~$\tilde{\sigma}$ of~$G$ such that~$\tilde{\sigma}|_H$ equals~$\sigma$.

For the remainder of this section, let~$H \leq G$ be a closed subgroup such that~$G$ decomposes (continuously) as a semi-direct product~$G =  H \ltimes V$, where~$V$ is closed and abelian. We describe the irreducible representations of~$G$ in terms of those of~$V$, using the classic description by Mackey \cite{Mac58} for general semi-direct products. 

In this set-up, the subgroup $H$ acts on $\Irr(V)$ by conjugation on the argument. Denote by $\Stab_H(\sigma)$ the point stabiliser of $\sigma \in \Irr(V)$ under this action. Then $\sigma$ is extendable to the subgroup~$H_\sigma = \Stab_H(\sigma) \ltimes V$ of $G$, as witnessed by the representation $\Ext_V^{H_\sigma}(\sigma) (h v ) = \sigma(v)$. Using this terminology, the structural form of irreducible representations of~$G$ may be described, cf.\ \cite[Proposition 25]{Ser96}.

\begin{proposition}\label{prop:reps of sd prod}
	Let~$G$ be the semi-direct product~$H\ltimes V$. Let~$\mathcal{X}$ be a set of representatives of the orbits of the action of~$H$ on~$\Irr(V)$. For each~$\chi \in \mathcal{X}$, let~$S_\chi$ be its point stabiliser in~$H$. 
	If $|H : S_\chi| < \infty$ for all $\chi \in \mathcal{X}$, and if the set~$\mathcal{X}$ is countable, then every irreducible representation of~$G$ is of the form
    \begin{equation*}
        \Ind_{ S_\chi\ltimes V }^{G}\left(\Inf_{S_\chi}^{ S_\chi\ltimes V }(\tau) \otimes \Ext_{V}^{ S_\chi \ltimes V}(\chi)\right),
    \end{equation*}
    for some~$\chi \in\mathcal{X}$, and some~$\tau \in \Irr(S_\chi)$. Two representations of this form are equivalent if and only if they are given by the same pair $\chi, \tau$.
\end{proposition}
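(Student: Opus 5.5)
This is the Mackey little-group method for $G = H \ltimes V$ with $V$ abelian. I would follow the standard Clifford-theory route, specialised so that every stabiliser has finite index.

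\emph{Step 1: restriction to $V$ and choice of orbit.} Let $\rho$ be an irreducible (smooth, hence finite-dimensional, continuous) representation of $G$. Since $\rho$ factors through a finite quotient, $\rho|_V$ is a finite direct sum of characters of $V$. Because $V$ is normal, $G$ permutes the isotypic components of $\rho|_V$, and this action factors through the action of $H$ on $\Irr(V)$. Irreducibility forces the characters that occur to form a single $H$-orbit. We may therefore choose $\chi \in \mathcal{X}$ occurring in $\rho|_V$, and set $S = S_\chi$ and $G_\chi = S \ltimes V$. Note that $G_\chi$ is exactly the stabiliser of $\chi$ in $G$, which is open because $|H:S|<\infty$.

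\emph{Step 2: Clifford correspondence.} Let $W$ be the $\chi$-isotypic component of $\rho|_V$. It is $G_\chi$-stable, and by the standard Clifford argument $\rho \cong \Ind_{G_\chi}^G(W)$ with $W$ irreducible for $G_\chi$. Here I would use Frobenius reciprocity together with the fact that the $G$-translates of $W$ are the distinct isotypic components, which are indexed by $G/G_\chi$. Conversely, any irreducible $W$ of $G_\chi$ lying over $\chi$ induces irreducibly, and distinct such $W$ give non-isomorphic inductions. This follows from Mackey's irreducibility criterion, since $\chi^g \neq \chi$ for $g \notin G_\chi$. Finite index is what makes induction preserve finite-dimensionality and smoothness and what makes the reciprocity formulas valid in the profinite setting.

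\emph{Step 3: Gallagher-type parametrisation.} The character $\chi$ extends to $G_\chi$ via $\Ext(\chi)(hv) = \chi(v)$. This is a homomorphism because $\chi(hvh^{-1}) = \chi(v)$ for $h \in S$. Then $W \otimes \Ext(\chi)^{-1}$ is a representation of $G_\chi$ on which $V$ acts trivially, so it is inflated from some $\tau \in \Irr(S)$, and irreducibility transfers in both directions. Hence the irreducible representations of $G_\chi$ over $\chi$ are exactly the $\Inf(\tau) \otimes \Ext(\chi)$, with $\tau$ determined uniquely by $W$.

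\emph{Step 4: uniqueness across orbits.} To see that the pair $(\chi,\tau)$ is determined by $\rho$, observe that the orbit of $\chi$ is recovered from $\rho|_V$, and since $\mathcal{X}$ has exactly one representative per orbit, $\chi$ is determined. Step 2 then recovers $W$ as the $\chi$-isotypic part, and Step 3 recovers $\tau$. The countability of $\mathcal{X}$ plays no role in the algebra; I would only remark that it guarantees that the smooth setting of \cite{KK19} applies.

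The main obstacle is the irreducibility and injectivity in Step 2 in the profinite/smooth setting. I would handle it by reducing to a finite quotient $G/N$, with $N \le V$ open and normal and contained in $\ker\rho$ and in $\ker\chi$. There the classical Clifford–Mackey theory for finite groups, as in \cite[Proposition 25]{Ser96}, applies verbatim.
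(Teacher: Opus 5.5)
Your proposal is correct and follows essentially the paper's route: the paper gives no argument of its own and simply cites Serre's Proposition~25 (Mackey's little-group method), which is what your Steps~1--4 reproduce. The one slip is in your last paragraph. $N$ cannot lie inside $V$, since an open subgroup of $G$ contained in $V$ would force $H$ to be finite. Instead, take an open normal subgroup $N_0\le\ker\rho$ of $G$ and pass to $N=(N_0\cap H)(N_0\cap V)$. This $N$ is still open and normal, satisfies $G/N\cong (H/(N\cap H))\ltimes(V/(N\cap V))$, and lies in $\ker\Ext_V^{S_\chi\ltimes V}(\chi)$. Alternatively, you can skip the reduction altogether: $|G:S_\chi\ltimes V|<\infty$ together with complete reducibility of smooth representations already makes your Clifford--Mackey arguments work directly.
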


For profinite (or more generally, compact) groups, the usual inner product on the set of characters associated to irreducible representations for finite groups is generalised by $\langle \chi, \theta \rangle_G = \int_G \chi(g) \overline{\theta(g)} \mathrm{d}\mu(g)$, where $\chi,\theta$ denote irreducible characters, and~$\mu$ denotes the normalised (left-)Haar measure of~$G$. As for finite groups, given an irreducible component~$\theta$ of~$\chi$, the value of~$\langle \chi, \theta \rangle_G$ equals the multiplicity of~$\theta$ appearing in the decomposition of~$\chi$. For us, the following equality will be of use.

\begin{proposition}\label{prop:consts of idrep}
	Using the terminology of the previous preposition, let~$\chi \in \mathcal{X}$ and~$\tau$ be an irreducible representation of~$S_\chi$. Denote by~$\theta_\tau$ and $\theta_{\tau,\chi}$ the characters associated to~$\tau$ and to the representation~$\Inf_{S_\chi}^{ S_\chi\ltimes V }(\tau) \otimes \Ext_{V}^{ S_\chi \ltimes V}(\chi)$ of~$S_\chi\ltimes V~$, respectively. Then 
    \[
        \langle \Ind_{ S_\chi \ltimes V}^{G}(\theta_{\tau,\chi}), \Ind_H^G(\idrep_H) \rangle_G = \langle \Ind_{ S_\chi }^{H} (\theta_\tau), \idrep_{H} \rangle_{H}.
    \]
\end{proposition}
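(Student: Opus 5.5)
The plan is to reduce both sides to an inner product over $S_\chi$ by Frobenius reciprocity, combined with Mackey's formula for restricting an induced representation to $H$. Write $K = S_\chi \ltimes V$ and $\psi = \theta_{\tau,\chi}$. Frobenius reciprocity for the open subgroup $H$ (finite index is not needed for the general compact version in \cite{KK19}) gives
\[
\langle \Ind_K^G(\psi), \Ind_H^G(\idrep_H)\rangle_G = \langle \Res^G_H \Ind_K^G(\psi), \idrep_H\rangle_H .
\]
Since $G = HV = HK$, there is only a single $(H,K)$-double coset, namely $G = H\cdot K$. Mackey's formula therefore collapses to
\[
\Res^G_H \Ind_K^G(\psi) = \Ind_{H\cap K}^H \Res^K_{H\cap K}(\psi),
\]
and we have $H \cap K = S_\chi$.

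Next I would compute $\Res^K_{S_\chi}(\psi)$. For $h \in S_\chi$, which is the element $h\cdot 1$ with trivial $V$-component, we get $\Inf(\tau)(h) = \tau(h)$ and $\Ext(\chi)(h) = \chi(1) = 1$. Here I use that $\chi$ is a linear character of the abelian group $V$, so $\Ext$ is one-dimensional and the tensor factor is trivial on $S_\chi$. Hence $\Res^K_{S_\chi}\psi = \theta_\tau$, and substituting gives
\[
\langle \Ind_K^G\psi, \Ind_H^G\idrep_H\rangle_G = \langle \Ind_{S_\chi}^H\theta_\tau, \idrep_H\rangle_H,
\]
as claimed.

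The main obstacle is justifying Mackey's formula and Frobenius reciprocity in the profinite setting with $H$ closed and possibly of infinite index in $G$. For this I would invoke the smooth-induction versions in \cite[Section 2.3]{KK19}.

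Alternatively, I can avoid Mackey by checking directly that $\Ind_K^G\psi$ restricted to $H$ is realised on functions $f$ with $f(kx) = \psi(k)f(x)$. Since $G = K H$ and $K \cap H = S_\chi$, restricting $f$ to $H$ gives an $H$-equivariant bijection onto the space of $\Ind_{S_\chi}^H(\Res\psi)$. This is a direct, elementary verification, and the finite-index assumption $|H:S_\chi|<\infty$ from \cref{prop:reps of sd prod} ensures all the representations involved are finite-dimensional.
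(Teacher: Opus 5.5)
Your argument is correct and is essentially the paper's proof. The paper also applies Frobenius reciprocity and then evaluates $\Res^G_H\Ind^G_{S_\chi\ltimes V}(\theta_{\tau,\chi})$ by hand, using three facts: coset representatives of $S_\chi$ in $H$ also serve for $S_\chi\ltimes V$ in $G$, $H\cap(S_\chi\ltimes V)=S_\chi$, and $\theta_{\tau,\chi}$ restricts to $\theta_\tau$ on $S_\chi$. That is exactly your one-double-coset Mackey computation written out at the level of characters. One small slip: $H$ need not be open in $G$ (in the application $V\cong\Op^n$ is infinite, so $H$ has infinite index), but, as you note, only the closed-subgroup form of Frobenius reciprocity is needed.
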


\begin{proof}
	Let~$\mathcal{R}_{S_\chi}$ be a set of coset representatives of~$S_\chi$ in~$H$, which also serves as a set of coset representatives of~$S_\chi \ltimes V$ in~$G$. For~$x \in \mathcal{R}_{S_\chi}$ and~$h \in H$, the conjugate~$h^x$ is contained in~$S_\chi \ltimes V$ if and only if it is contained in~$S_\chi$. Since~$H$ is closed, by \cite[Theorem 6.10]{Fol95}, Frobenius~reciprocity yields
	\belowdisplayskip=-11pt
    \begin{align*}
        \langle \Ind_{ S_\chi \ltimes V }^{G}(\theta_{\tau,\chi}), \Ind_H^G(\idrep_H) \rangle_G
         &=\langle \Res_H^{G}\Ind_{ S_\chi \ltimes V}^G(\theta_{\tau,\chi}), \idrep_H \rangle_H\\
        &= \int_H \sum_{\substack{x \in\mathcal{R}_{S_\chi} \\(h)^x \in  S_\chi \ltimes V }} \theta_{\tau,\chi}(h^x) \mathrm d \mu(h)\\
        &=\int_H \sum_{\substack{x \in\mathcal{R}_{S_\chi} \\h^x \in S_\chi}} \theta_\tau(h^x) \cdot \chi(1) \mathrm d \mu(h)\\
         &= \int_H \Ind_{S_\chi}^H(\theta_\tau)(h) \cdot\idrep_H(h)\, \mathrm d \mu(h)
         = \langle \Ind_{S_\chi}^H(\theta_\tau), \idrep_H \rangle_H.\qedhere
    \end{align*}
\end{proof}

\subsection{Representation zeta functions} 
\label{sub:representation_zeta_functions}

Let $G$ be an abstract group such that $|\Irr_n(G)|$ is finite, for all $n \in \N$. The \emph{representation zeta function of $G$} is the Dirichlet generating function
\[
	\zeta_G(s) = \sum_{\varphi \in \Irr(G)} \dim(\varphi)^{-s} = \sum_{n \in \N} |\Irr_n(G)| n^{-s}.
\]
Changing the prospective, Kionke and Klopsch \cite{KK19} introduced a more general zeta function that can be associated to any ‘suitably tame’ infinite-dimensional representation of a group.

A smooth representation~$\sigma$ of a profinite group~$G$ is called \emph{strongly~admissible}, if its decomposition into irreducible constituents $\sigma = \bigoplus_{\varphi \in \Irr(G)} m(\sigma, \varphi) \varphi$ satisfies
\[
	\sum_{\varphi \in \Irr_n(G)} m(\sigma, \varphi) < \infty
\]
for all~$n \in \N$. To a strongly admissible representation~$\sigma$ of~$G$, one associates the formal Dirichlet generating function
\[
    \zeta_\sigma(s) = \sum_{\varphi \in \Irr(G)} m(\sigma, \varphi) \dim(\varphi)^{-s},
\]
which is called the \emph{zeta function associated to~$\sigma$}.

For the regular representation $\rho=\Ind_{1}^{G}(\idrep)$, which contains every~$\varphi \in \Irr(G)$ with multiplicity~$\dim(\varphi)$, one finds
\begin{equation*}
    \zeta_\rho(s) = \sum_{\varphi \in \Irr(G)} \dim(\varphi)^{1-s} = \zeta_G(s-1),
\end{equation*}
provided $\rho$ is strongly admissible; in this way, the zeta functions of strongly admissible representations generalise the representation zeta functions of groups, see \cite[Example~2.5]{KK19}.

Let~$G$ be a finitely generated profinite group and let~$H \leq G$ be a closed subgroup. We are concerned with representations of the form~$\Ind_H^G(\idrep_H)$; to simplify our notation we set
\[
	\zeta_{H}^G (s) = \zeta_{\Ind_H^G(\idrep_H)}(s)
\]
and call this zeta function the \emph{representation zeta function of $G$ relative to $H$}. By~\cite[Theorem A]{KK19}, the representation~$\Ind_H^G(\idrep_H)$ is strongly admissible if and only if the group~$G$ is \emph{FAb relative to~$H$}, i.e.\ if the quotient~$K/(H\cap K)[K,K]$ is finite for every open subgroup~$K$ of~$G$.


\subsection{Saturable and uniformly potent pro-\emph{p} groups} 
\label{sub:uniformly_potent_pro_p_groups}

If~$p$ is an odd prime, a pro-$p$ group~$G$ is called \emph{potent} if~the $(p-1)$\textsuperscript{st} term of the lower central series $\gamma_{p-1}(G)$ is contained in the the subgroup~$G^{p}$ generated by~$p$\textsuperscript{th} powers. For~$p = 2$, the group $G$ is considered potent if~${[G,G] \leq G^4}$. The group $G$ is called \emph{powerful} if $[G,G] \leq G^p$ for odd primes, or if $[G,G] \leq G^4$ for $p = 2$. It is evident that every powerful group is potent. A finitely generated, potent (resp.\ powerful)  and torsion-free pro-$p$ group is called \emph{uniformly~potent} (resp.\ uniformly powerful); such groups are \emph{saturable} in the sense of Lazard, see~\cite{Gon07,Klo05,Laz65}.

Let $R$ be a commutative ring. An $R$-Lie lattice is a free $R$-module of finite rank endowed with a Lie bracket. To a uniformly potent pro-$p$ group~$G$, one can associates a potent~$\mathbb{Z}_p$-Lie lattice~$\mathfrak{g} = \log(G)$, which coincides with~$G$ as a topological space. See~\cite{GK09} for a treatment of the Lie correspondence between~$G$ and~$\mathfrak{g}$. Here, a~$\Z_p$-Lie lattice~$\mathfrak{g}$ is called potent if~$\gamma_{p-1}(\mathfrak{g})\subseteq p\, \mathfrak{g}$ for odd primes~$p$ and~$\gamma_{2}(\mathfrak{g})\subseteq 4 \,\mathfrak{g}$ in case~$p=2$.

Let~$\Op$ be a compact discrete valuation ring of characteristic~$0$ and residue characteristic~$p$, with valuation ideal~$\p$, and let~$K$ be the field of fractions of~$\Op$, which constitutes a finite extension of~$\Q_p$. Given a $\mathbb{Z}_p$-Lie lattice $\mathfrak{g}$, write~$\mathfrak{g}_\mathfrak{p}$ for the tensor product~$\mathfrak{g}\otimes_{\Z_p} \Op$. If $\mathfrak{g}$ is potent, so is~$\mathfrak{g}_\mathfrak{p}$.
For~${m\in \mathbb{N}_0}$, its principal congruence sublattices are~$\mathfrak{g}_{\mathfrak{p},m}=\pi^m \cdot\mathfrak{g}_\mathfrak{p}$. 
The~$\Op$-Lie lattice~$\mathfrak{g}_{\mathfrak{p},m}$ is called \emph{potent} if it is potent as a~$\Z_p$-Lie lattice.
The Hausdorff series defines a group multiplication on~$\mathfrak{g}_{\mathfrak{p},m}$ allowing us to define the group~$\exp(\mathfrak{g}_{\mathfrak{p},m})$.
The Lie lattice~$\mathfrak{g}_{\mathfrak{p},m}$ is potent for all sufficiently large integers~$m$, so that~$G_{\mathfrak{p},m} = \exp(\mathfrak{g}_{\mathfrak{p},m})$ is a uniformly potent pro-$p$ group. We call such positive integers~$m$ \emph{permissible} for~$\mathfrak{g}$, cf.\ \cite[Definition~2.2]{AKOV13}. For a given $\Op$-lattice~$\mathfrak{g}$, almost all positive integers are permissible. Indeed, denote by $e=e(\Op,\Z_p)$ the absolute ramification index of $\Op$. Then, for odd primes, every $m\geqslant e/ (p-2)$ is permissible, and for $p = 2$, every $m\geqslant 2e$ is permissible, cf.\ \cite[Proposition~2.3]{AKOV13}. In particular, if~$\Op$ is unramified over~$\mathbb{Z}_p$ and~$p$ is odd, every~$m\in \mathbb{N}$ is permissible for every~$\Op$-Lie lattice~$\mathfrak{g}$, cf.\ \cite[Section~2.1]{AKOV13}.

In order to deal with semi-direct products, i.e., to ensure that the $\Op$-Lie lattice~$\mathfrak{g}_{\mathfrak{p},m}$ is potent, we need to impose the following stronger condition.

\begin{definition}   
    Let $\Op$ be a compact discrete valuation ring of characteristic~$0$, and valuation ideal~$\mathfrak{p}$, and let $\mathfrak{h}$ be an $\Op$-Lie lattice. We call $m \in \N_0$ \emph{soundly permissible} for~$\mathfrak{h}$ if for every faithful $\Op$-Lie lattice homomorphism $\sigma \colon \mathfrak{p}^m\mathfrak{h} \to \mathfrak{p}^m\mathfrak{gl}_n(\Op)$, with~${n \in \N_0}$, the corresponding semi-direct sum $\mathfrak{p}^m(\mathfrak{h} \oplus \Op^n)$ is potent.
\end{definition}

It is immediate that if $m$ is soundly permissible for $\mathfrak{h}$, it is in particular permissible. We now show that for every $\Op$-Lie lattice $\mathfrak{h}$, every sufficiently large $m \in \N$ is soundly permissible.

\begin{lemma}\label{lem:semidirect potent}
    Let $\Op$ be a compact discrete valuation ring of characteristic $0$ and residue characteristic~$p$, let $e = e(\Op, \mathbb{Z}_p)$ be the absolute ramification index, and let $\mathfrak{h}$ be an $\Op$-Lie lattice. If $p$ is odd, every $m \geq e$ is soundly permissible. If $p$ is $2$, every $m \geq 2e$ is soundly permissible. In particular, for every faithful $\Op$-Lie lattice homomorphism $\sigma \colon \mathfrak{p}^m\mathfrak{h} \to \mathfrak{p}^m\mathfrak{gl}_n(\Op)$, the group $\exp(\mathfrak{p}^m(\mathfrak{h} \oplus \Op^n))$ is uniformly potent.
    
    Furthermore, in the case that $\exp(\mathfrak{p}^m\mathfrak{h})$ is a uniformly powerful group for some integer~$m$ satisfying the condition above, the group $\exp(\mathfrak{p}^m(\mathfrak{h} \oplus \Op^n))$ is also uniformly powerful.
\end{lemma}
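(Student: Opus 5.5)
The strategy is to verify potency directly on the semi-direct sum $\mathfrak{g} = \mathfrak{p}^m(\mathfrak{h}\oplus\Op^n)$. Two facts drive the argument. First, $\pi^e\Op = p\Op$, so $\mathfrak{p}^e$-multiples of a lattice are exactly its $p$-multiples. Second, the bracket on $\mathfrak{g}$ is bilinear and scales as $[\mathfrak{p}^a x,\mathfrak{p}^b y] = \mathfrak{p}^{a+b}[x,y]$.

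First I fix the Lie structure. Via $\sigma$, the bracket on $\mathfrak{g}$ is given by
\[
[(x,v),(y,w)] = ([x,y],\ \sigma(x)w - \sigma(y)v),
\]
and $\Op^n$ is an abelian ideal. The hypothesis that $\sigma$ maps $\mathfrak{p}^m\mathfrak{h}$ into $\mathfrak{p}^m\mathfrak{gl}_n(\Op)$ gives $\sigma(\mathfrak{p}^m\mathfrak{h})\,\mathfrak{p}^m\Op^n \subseteq \mathfrak{p}^{2m}\Op^n$. Here I am reading the semi-direct sum as $\mathfrak{p}^m$ times the $\Op$-lattice $\mathfrak{h}\oplus\Op^n$, with the action extended by linearity from $\sigma$. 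The $\mathfrak{h}$-component of a bracket is the bracket in $\mathfrak{p}^m\mathfrak{h}$, which lies in $\mathfrak{p}^{2m}\mathfrak{h}$. Together these show
\[
[\mathfrak{g},\mathfrak{g}] \subseteq \mathfrak{p}^{m}\mathfrak{g}.
\]

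Next comes the numerical check. For odd $p$ with $m\ge e$, we get $[\mathfrak{g},\mathfrak{g}]\subseteq \mathfrak{p}^e\mathfrak{g} = p\mathfrak{g}$. So $\mathfrak{g}$ is powerful, and in particular $\gamma_{p-1}(\mathfrak{g}) \subseteq \gamma_2(\mathfrak{g}) \subseteq p\mathfrak{g}$. For $p=2$ with $m\ge 2e$, the same estimate gives $[\mathfrak{g},\mathfrak{g}]\subseteq \mathfrak{p}^{2e}\mathfrak{g} = 4\mathfrak{g}$. Hence $\mathfrak{g}$ is potent in both cases. This holds for every faithful $\sigma$ and every $n$, so $m$ is soundly permissible.

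The group-theoretic conclusion then follows from the Lazard correspondence for potent lattices \cite{GK09}. $\mathfrak{g}$ is a potent $\Z_p$-Lie lattice, free of finite rank, so $\exp(\mathfrak{g})$ is a saturable, hence torsion-free, finitely generated potent pro-$p$ group, i.e.\ uniformly potent. For the final claim, the Lazard correspondence identifies $[G,G]$ with $[\mathfrak{g},\mathfrak{g}]$ up to the usual comparison in the potent setting, and identifies $G^p$ with $p\mathfrak{g}$. The bound $[\mathfrak{g},\mathfrak{g}]\subseteq p\mathfrak{g}$ (respectively $4\mathfrak{g}$) then gives $[G,G]\le G^p$ (respectively $G^4$), so $G$ is uniformly powerful. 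Since the estimate above already yields powerfulness, the extra hypothesis on $\exp(\mathfrak{p}^m\mathfrak{h})$ is in fact automatic here.

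The main obstacle is the passage from the Lie lattice to the group in this last step. I must make sure that the commutator subgroup of $\exp(\mathfrak{g})$ corresponds to a sublattice of $p\mathfrak{g}$. I would justify this using the known fact that, for saturable groups, powerful groups correspond to powerful lattices (cf.\ \cite{Klo05}). If that reference is not available, I would argue via Hausdorff series estimates: these show that group commutators lie in the Lie ideal generated by brackets, and that ideal is contained in $p\mathfrak{g}$.
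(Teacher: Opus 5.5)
Your proof is correct, but it is organised differently from the paper's.

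The paper splits $\gamma_{p-1}(\mathfrak{p}^m(\mathfrak{h}\oplus\Op^n))$ into two pieces:
\begin{itemize}
\item the piece $\gamma_{p-1}(\mathfrak{p}^m\mathfrak{h})$, which it controls by quoting permissibility of $m$ for $\mathfrak{h}$ from \cite{AKOV13};
\item the module piece $[\mathfrak{p}^m\Op^n,\mathfrak{p}^m\mathfrak{h},\dots,\mathfrak{p}^m\mathfrak{h}]$, which it bounds by a single application of $\sigma(\mathfrak{p}^m\mathfrak{h})\subseteq\mathfrak{p}^m\mathfrak{gl}_n(\Op)$. This lands in $\mathfrak{p}^{2m}\Op^n\subseteq p\,\mathfrak{p}^m\Op^n$ once $m\geq e$.
\end{itemize}
The paper then declares the powerful case analogous.

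You instead note that your $\mathfrak{g}=\mathfrak{p}^m(\mathfrak{h}\oplus\Op^n)$ is the $m$th congruence sublattice of an honest $\Op$-Lie lattice. Hence $[\mathfrak{g},\mathfrak{g}]\subseteq\mathfrak{p}^m\mathfrak{g}\subseteq p\mathfrak{g}$ (resp.\ $4\mathfrak{g}$) in one step. This gains you three things:
\begin{itemize}
\item the argument is self-contained, with no appeal to \cite{AKOV13} for the $\mathfrak{h}$-part;
\item it yields the stronger statement that $\mathfrak{g}$ is powerful, so the potent and powerful assertions are proved simultaneously;
\item your remark that the extra hypothesis of the final clause is automatic for $m\geq e$ (resp.\ $m\geq 2e$) is correct, since the same estimate applies to $\mathfrak{p}^m\mathfrak{h}$ itself.
\end{itemize}

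What the paper's decomposition offers is only that it targets the weaker potency condition on $\gamma_{p-1}$ and reuses a known fact about $\mathfrak{h}$. As written, it reaches the same threshold on $m$ as yours. The passage from lattice to group is handled at the same level of detail in both: via the Lazard correspondence, respectively the correspondence between powerful lattices and uniform groups.
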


\begin{proof}
    Note that under the given assumption on $m$, it is permissible for $\mathfrak{h}$ by \cite{AKOV13}. Here we use that, for $p = 2$, the integer is at least $2e$.
    
    Let $\sigma \colon \mathfrak{p}^m \mathfrak{h} \to \mathfrak{p}^m \mathfrak{gl}_n(\Op)$ be a faithful Lie lattice homomorphism and denote the corresponding semi-direct sum $\mathfrak{h} \oplus \Op^n$ by $\mathfrak{g}$. Assume that $p$ is odd. The~$(p-1)$\textsuperscript{st} term of the lower central series of $\mathfrak{p}^m\mathfrak{g}$ is generated by~$\gamma_{p-1}(\mathfrak{p}^m\mathfrak{h})$ and~$[\mathfrak{p}^m\Op^n, \mathfrak{p}^m\mathfrak{h}, \dots, \mathfrak{p}^m\mathfrak{h}]$, since~$[\mathfrak{p}^m\Op^n, \mathfrak{p}^m\mathfrak{h}] \subseteq \mathfrak{p}^m\Op^n$, which is abelian. Since $m$ is permissible for $\mathfrak{h}$, we find $\gamma_{p-1}(\mathfrak{p}^m\mathfrak{h}) \subseteq \mathfrak{p}^{m+1}\mathfrak{h}$. Let $h \in \mathfrak{p}^m\mathfrak{h}$ and $v \in \mathfrak{p}^m\Op^n$. Then
    \[
		[v, h] = \sigma(h)(v) \in v \cdot \mathfrak{p}^m \mathfrak{gl}_n(\Op),
	\]
    whence $\gamma_{p-1}(\mathfrak{p}^m\mathfrak{g})$ is a subset of $\mathfrak{p}^{2m} \Op^n$. Since $m \geq e$, also $\mathfrak{p}^{2m} \Op \subseteq (\mathfrak{p}^m\Op)^p$ and $\mathfrak{p}^m\mathfrak{g}$ is potent. The case $p = 2$ and the statements regarding uniformly powerful groups follows in the same way.
\end{proof}

In the realm of finitely generated profinite groups, representation rigid groups are algebraically characterised by the FAb property: a topological group is FAb if every open subgroup~$H$ has finite abelianisation~$H/[H,H]$, see~\cite[Proposition~2]{BLMM02}.
For a~$p$-adic analytic group~$G$, this characterisation can be carried over to its $p$-adic Lie algebra, employing Lazard's correspondence between saturable pro-$p$ groups and saturable~$\mathbb{Z}_p$\nobreakdash -Lie lattices, cf.\ \cite{Laz65,GK09}. In particular, a~$p$-adic analytic group~$G$ is FAb, and hence representation rigid, if and only if it has an open FAb saturable pro-$p$ subgroup~$U$; and a saturable pro-$p$ subgroup~$U$ is FAb if and only if the~$\mathbb{Z}_p$\nobreakdash-Lie lattice~$\log(U)$ is FAb. Furthermore, the latter happens if and only if the ~$\Q_p$-Lie algebra~$\log(U)\otimes_{\mathbb{Z}_p}\mathbb{Q}_p$ is perfect,~cf.\ \cite[Proposition 2.1]{AKOV13}.


\subsection{The \emph{p}-adic integration formalism} 
\label{sub:the_p_adic_integration_formalism}

Making use of the Kirillov orbit method~\cite{Gon09}, Klopsch and Kionke \cite[Section~4]{KK19} describe the relative representation zeta functions of uniformly potent pro-$p$ groups and their potent subgroups using a $p$-adic integral as follows.

Let $\mathfrak{g}$ be an $\Op$-Lie lattice with $\Op$-basis $\mathfrak{Z}$ and let $\mathfrak{X}, \mathfrak{Y}$ be (ordered) subsets of $\mathfrak{Z}$. Write $g \mapsto g^\ast$ for the map from $\mathfrak{g}$ to its dual $\mathfrak{g}^\ast$ induced by the mapping of $\mathfrak{Z}$ to its dual basis. The \emph{commutator matrix of~$\mathfrak g$ with respect to $\mathfrak{X}$ and $\mathfrak{Y}$} is the $|\mathfrak{X}|$-by-$|\mathfrak{Y}|$ matrix $\Com(\mathfrak{X}, \mathfrak{Y})$ of $\Op$-linear forms with entries $[x, y]^\ast$ for $x$ running through $\mathfrak{X}$ and~$y$ running through $\mathfrak{Y}$.  If $\mathfrak{X} = \mathfrak{Y}$, this matrix is skew-symmetric. Write $\Com(\mathfrak{X}, \mathfrak{Y})(g)$ for the matrix whose entries are the entries of the commutator matrix applied to an element $g \in \mathfrak{g}$.

The determinant, restricted to the set of $n$-by-$n$ skew-symmetric matrices, is the square (as a polynomial) of the \emph{Pfaffian determinant}, c.f.\ e.g.\ \cite[Chapter 9.5]{Igu00}. A \emph{Pfaffian minor} of a skew-symmetric matrix is the Pfaffian determinant of a principal sub-matrix. We write~$\Pfaff(T)$ for the set of all Pfaffian minors of a skew-symmetric matrix~$T$, and $\operatorname{Minor}(A)$ for the set of all (not necessarily principal) minors of a matrix~$A$.

Let~$|\cdot|_\mathfrak{p}$ denote the~$\mathfrak{p}$-adic norm on $K$. For a subset~$S \subseteq K$, define
\[
	\lVert S \rVert_{\p}= \max\{ |s|_{\p} \mid s \in S \}.
\]

\begin{theorem}\textup{\cite[Proposition C and Proposition~4.6]{KK19}} \label{thm:rel zeta integral}
    Let~$\Op$ be a compact discrete valuation ring of characteristic~$0$, residue characteristic~$p$, and of residue field cardinality~$q$. Let~$\mathfrak{p}$ be the valuation ideal, and let $\pi$ be a uniformiser, let~$K$ be the field of fractions of~$\Op$, let~$\mathfrak{g}$ be an~$\Op$-Lie lattice and  let~$m\in \mathbb{N}$ be such that~$G=\exp(\pi^m\mathfrak{g})$ is a uniformly potent pro-$p$ group, and let~$\mathfrak{h}$ be direct summand of the~$\Op$-Lie lattice corresponding to a potent subgroup~$H=\exp(\pi^m\mathfrak{h})\leqslant_c G$ such that~$G$ is FAb relative to~$H$. Write~$d = \dim_{\Op} \mathfrak g - \dim_{\Op} \mathfrak h$. Let~$\mathfrak{Y}$ be a basis of~$\mathfrak{h}$ and let~$\mathfrak{Z} = \mathfrak{X} \cup  \mathfrak{Y}$ be an extension to an~$\Op$-basis of~$\mathfrak{g}$. Then
        \[
            \zeta^G_{H}(s) =  q^{md}\int\limits_{K\mathfrak{X}} \lVert \Pfaff( \Com(\mathfrak{Z}, \mathfrak{Z})(w))\rVert_{\p}^{-1-s} \mathrm{d}\mu(w),
        \]
    where~$\mu$ denotes the Haar~measure of~$K\mathfrak{X}$ normalised with respect to~$\mathfrak{g}$.
\end{theorem}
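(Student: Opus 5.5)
The plan is to follow the Kirillov orbit method for uniformly potent pro-$p$ groups~\cite{Gon09}, combined with Frobenius reciprocity, and then to convert the resulting orbit sum into a $\mathfrak{p}$-adic integral in the style of Igusa.

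\emph{Step 1: irreducible characters as coadjoint orbits.} Since $G=\exp(\pi^m\mathfrak g)$ is uniformly potent, hence saturable, the irreducible characters of $G$ correspond bijectively to the $G$-orbits $\Omega$ in the Pontryagin dual $\widehat{\pi^m\mathfrak g}=\operatorname{Hom}_{\mathrm{cts}}(\pi^m\mathfrak g,\mathbb{C}^\times)$ under the co-adjoint action. For such an orbit we have
\[
\dim\chi_\Omega=|\Omega|^{1/2}, \qquad \chi_\Omega(\exp x)=|\Omega|^{-1/2}\sum_{\omega\in\Omega}\omega(x).
\]

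\emph{Step 2: multiplicities via Frobenius reciprocity.} By Frobenius reciprocity,
\[
m(\Ind_H^G\idrep_H,\chi_\Omega)=\langle \Res_H\chi_\Omega,\idrep_H\rangle_H .
\]
Because $H=\exp(\pi^m\mathfrak h)$ is potent, $\exp$ carries Haar measure on $\pi^m\mathfrak h$ to Haar measure on $H$. Hence this inner product equals $|\Omega|^{-1/2}\cdot|\Omega\cap(\pi^m\mathfrak h)^\perp|$, since $\int_{\pi^m\mathfrak h}\omega=1$ if $\omega$ vanishes on $\pi^m\mathfrak h$ and $0$ otherwise. Summing over orbits gives
\[
\zeta^G_H(s)=\sum_\Omega |\Omega|^{-(1+s)/2}\,|\Omega\cap(\pi^m\mathfrak h)^\perp|=\sum_{\omega\in(\pi^m\mathfrak h)^\perp}|G\cdot\omega|^{-(1+s)/2}.
\]
The relative FAb hypothesis, via \cite[Theorem~A]{KK19}, guarantees that this sum has finitely many terms for each orbit size. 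Since $\mathfrak h$ is a direct summand, $(\pi^m\mathfrak h)^\perp$ is identified through the dual basis $\mathfrak X^\ast$ with $(K\mathfrak X)/(\text{dual lattice})$. The scaling by $\pi^m$ on the $d$ coordinates $\mathfrak X$ accounts for the factor $q^{md}$.

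\emph{Step 3: orbit sizes from the commutator matrix.} The stabiliser of $\omega$ is the exponential of $\{x:\omega([x,\pi^m\mathfrak g])=1\}$. Write $\omega=\psi\circ w$ with $w\in K\mathfrak X$ and $\psi$ a fixed additive character of conductor $\Op$. Then $|G\cdot\omega|$ is the index in $\mathfrak g$ of the kernel of the map $\mathfrak g\to(K/\Op)^{\mathfrak Z}$ given by $\Com(\mathfrak Z,\mathfrak Z)(w)$. By the elementary divisor theory of skew-symmetric matrices over a DVR, the elementary divisors come in pairs. Their product, truncated at $1$, is governed by the norms of the Pfaffian minors. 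Grouping the $w$ by level and using the fibration $K\mathfrak X\to K\mathfrak X/\Op\mathfrak X$, the sum over $\omega$ becomes
\[
\int_{K\mathfrak X}\lVert\Pfaff(\Com(\mathfrak Z,\mathfrak Z)(w))\rVert_{\mathfrak p}^{-1-s}\,\mathrm d\mu(w),
\]
where the empty Pfaffian, equal to $1$, handles the integral part $\Op\mathfrak X$.

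\emph{Main obstacle.} I expect Step~3 to be the hardest part. There one must check precisely that $|G\cdot\omega|^{1/2}$ equals the supremum norm of the Pfaffian minors of $\Com(\mathfrak Z,\mathfrak Z)(w)$, with the correct normalisation at each level. One must also match the counting measure on the finite quotients with $\mu$ normalised to $\mathfrak g$. I would first verify this on a single level set $\pi^{-N}\Op\mathfrak X\setminus\pi^{-N+1}\Op\mathfrak X$, reducing it to a Smith normal form computation for skew matrices.
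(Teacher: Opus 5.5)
Your outline is correct and is essentially the argument behind this result. The paper gives no proof of its own but quotes \cite[Proposition~C and Proposition~4.6]{KK19}, which uses the same ingredients: the Kirillov orbit method for potent saturable groups, Frobenius reciprocity giving the multiplicity $|\Omega\cap(\pi^m\mathfrak{h})^\perp|\cdot|\Omega|^{-1/2}$, and orbit sizes read off from the elementary divisors, equivalently the Pfaffian minors, of the commutator matrix, with the paper's formula differing only by identifying the dual with $K\mathfrak{X}$ through the basis $\mathfrak{Z}$, exactly as your $\omega=\psi\circ w$ does.
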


Note that in comparison to \cite{KK19}, our formula is `dualised'. In \cite{KK19}, the commutator matrix has entries in~$\mathfrak{g}$ and the area of integration is the dual of $K\mathfrak{X}$.

Recall that the representation zeta function of a suitable group $G$ fulfils the equation $\zeta_G(s) = \zeta^G_1(s+1)$. In this way, the theorem above generalises similar formulae found in \cite{AKOV13, Jai05}.

Since we are interested in the special case where $G$ is a semi-direct product of some group $H$ with an abelian group, we can somewhat simplify the formula above.

\begin{proposition}\label{prop:integral formula-semidirect case}
    Using the same notation as in \cref{thm:rel zeta integral}, assume that $\mathfrak{g}$ is a semi-direct sum $\mathfrak{h} \oplus \Op^n$ with respect to a faithful $\Op$-Lie lattice homomorphism $\sigma \colon \mathfrak{p}^m \mathfrak{h} \to \mathfrak{p}^m \mathfrak{gl}_n(\Op)$ for $m$ soundly permissible for $\mathfrak{h}$, and assume that $\mathfrak{X}$ generates~$\Op^n$. Then the representation zeta function of $G = \exp(\mathfrak{p}^m\mathfrak{g}) \cong H \ltimes \Op^n$ relative to $H = \exp(\mathfrak{p}^m \mathfrak{h})$ satisfies
    \begin{align*}
        \zeta^G_{H}(s) &=q^{mn}\int_{K\mathfrak{X}} \lVert \operatorname{Minor}(\Com(\mathfrak{X}, \mathfrak{Y})(w))\rVert_{\p} ^{-1-s}\mathrm d \mu(w),
    \end{align*}
    where~$\mu$ denotes the Haar~measure of~$K\mathfrak{X}$ normalised with respect to~$\mathfrak{g}$.   
\end{proposition}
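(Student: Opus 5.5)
We start from \cref{thm:rel zeta integral}, applied with $\mathfrak{Z} = \mathfrak{X}\cup\mathfrak{Y}$. Here $\mathfrak{Y}$ is a basis of $\mathfrak{h}$ and $\mathfrak{X}$ is a basis of $\Op^n$, so $d = n$ and the prefactor is $q^{mn}$. The soundly permissible hypothesis and \cref{lem:semidirect potent} guarantee that $G$ is uniformly potent and that $H$ is a potent subgroup, as the theorem requires. It then suffices to prove, for every $w \in K\mathfrak{X}$, the identity
\[
\lVert \Pfaff(\Com(\mathfrak{Z},\mathfrak{Z})(w))\rVert_{\p} = \lVert \operatorname{Minor}(\Com(\mathfrak{X},\mathfrak{Y})(w))\rVert_{\p},
\]
after which the two integrands agree pointwise.

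First we determine the block shape of the commutator matrix evaluated at $w\in K\mathfrak{X}$. Since $\Op^n$ is abelian, $[x,x']=0$ for $x,x'\in\mathfrak{X}$, so the $\mathfrak{X}\times\mathfrak{X}$ block vanishes. For $y,y'\in\mathfrak{Y}$ the bracket $[y,y']$ lies in $\mathfrak{h}$, so its coordinate functional $[y,y']^\ast$ only involves the dual elements $y''^\ast$, which vanish on $K\mathfrak{X}$; hence the $\mathfrak{Y}\times\mathfrak{Y}$ block also vanishes at $w$. The only remaining block is $B=\Com(\mathfrak{X},\mathfrak{Y})(w)$, and the matrix has the form
\[
\begin{pmatrix} 0 & B \\ -B^{t} & 0\end{pmatrix}.
\]

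Next we compute the Pfaffian minors of this block matrix. Take a principal submatrix indexed by $I\subseteq\mathfrak{X}$ and $J\subseteq\mathfrak{Y}$. If $|I|\neq|J|$, the submatrix is singular, because it has the form $\left(\begin{smallmatrix}0&B_{IJ}\\-B_{IJ}^t&0\end{smallmatrix}\right)$ with rank at most $2\min(|I|,|J|)$; so its Pfaffian is $0$. If $|I|=|J|=k$, the standard identity gives $\Pfaff = \pm\det(B_{IJ})$, for example via the square root of $\det$ equal to $\det(B_{IJ})^2$, up to sign. The nonzero Pfaffian minors are therefore, up to sign, exactly the square minors of $B$, including the empty minor $1$. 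Both sets contain $1$ by the convention for empty minors, so their maxima of absolute values coincide.

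The main obstacle is bookkeeping rather than mathematics. We must check that the normalisation of the Haar measure and of the ordering conventions in \cref{thm:rel zeta integral} transfer unchanged. We must also justify the claim that $[y,y']^\ast$ vanishes on $K\mathfrak{X}$. This uses the dual basis precisely: $\mathfrak{h}$ is spanned by $\mathfrak{Y}$, so only $\mathfrak{Y}$-coordinates appear in these brackets. Finally, note that in the semi-direct sum the brackets $[x,y] = -\sigma(y)x$ lie in $\Op^n$, which is what makes $B$ genuinely depend only on the $\mathfrak{X}$-coordinates of $w$.
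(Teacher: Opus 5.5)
Your proposal is correct and follows the paper's proof. Both start from \cref{thm:rel zeta integral}, observe that on $K\mathfrak{X}$ the commutator matrix takes the block form $\left(\begin{smallmatrix}0 & B\\ -B^{\intercal} & 0\end{smallmatrix}\right)$ (because $\Op^n$ is abelian and $\mathfrak{h}$ is closed under the bracket), and then identify its Pfaffian minors with the minors of $B$ up to sign. Your explicit remark that principal submatrices with $|I|\neq|J|$ have vanishing Pfaffian is slightly more careful than the paper, which only states that the determinant equals $\det(A)^2$ and does not note that this is zero when $A$ is non-square.
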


\begin{proof}
	In view of \cref{thm:rel zeta integral}, consider the commutator matrix $\Com(\mathfrak{Z}, \mathfrak{Z})$. Since $V$ is abelian, also~$\mathfrak{v}$ is abelian, the commutator matrix has block form
	\[
		\Com(\mathfrak{Z}, \mathfrak{Z}) = \begin{pmatrix}
        	0 & \Com(\mathfrak{X}, \mathfrak{Y})\\
    		\Com(\mathfrak{Y}, \mathfrak{X}) & \Com(\mathfrak{Y},  \mathfrak{Y})
    	\end{pmatrix}.
	\]
	Naturally, $\Com(\mathfrak{Y}, \mathfrak{X}) = -\Com(\mathfrak{X}, \mathfrak{Y})^\intercal$. Since $\mathfrak{h}$ is closed under the Lie bracket, the entries of $\Com(\mathfrak{Y},  \mathfrak{Y})$ are functional of $\mathfrak{h}$ and $K\mathfrak{X}$ is contained in their kernel, whence, for any $w \in K\mathfrak{X}$, the matrix $\Com(\mathfrak{X}, \mathfrak{Y})(w)$ has the form
	\[
		\begin{pmatrix}
			0 & \Com(\mathfrak{X}, \mathfrak{Y})(w)\\
			-\Com(\mathfrak{X}, \mathfrak{Y})^\intercal(w) & 0
		\end{pmatrix}.
	\]
	Its principal sub-matrices are again of the shape~$\left(\begin{smallmatrix}0 & A \\ -A^\intercal & 0\end{smallmatrix}\right)$, where $A$ is a (generally non-principal) sub-matrix of~$\Com(\mathfrak{X}, \mathfrak{Y})(w)$.

	The determinant of such a block matrix is equal $\det(A)^2$, whence $\Pfaff(\Com(\mathfrak{Z}, \mathfrak{Z})(w))$ is equal to $\operatorname{Minor}(\Com(\mathfrak{X}, \mathfrak{Y})(w))$. Reviewing the integral formula, we have proven the desired statement.
\end{proof}


\section{Zeta functions of subgroups and semi-direct products}

\subsection{Thetyspectral groups}

In the study of certain instance of a zeta function (i.e., a specific counting problem) related to groups, it is a common theme to consider when two non-isomorphic groups yield the same zeta function. Two such groups are referred to as \emph{isospectral}; for instance, see \cite{LS03}. In the case of the representation zeta function, a similar but generally distinct behaviour is exhibited by different congruence subgroups of the same uniformly potent pro-$p$ group: the resulting zeta functions are not equal but only differ by a constant factor; as readily observed from \cref{thm:rel zeta integral} and \cite[Proposition 6]{GJK14}. To facilitate the discussion of this phenomenon, we recall the previously introduced terminology.

\begin{definition}
    Let $G$ be a group, let $H$ be a subgroup of $G$ and let $\lambda \in \mathbb{N} $ be a constant. We say that $H$ is \emph{thetyspectral with factor $\lambda$} if $\zeta_H (s)= \lambda \zeta_G(s)$.
\end{definition}

We establish a condition that identifies certain thetyspectral subgroups by describing the representation zeta function as a $p$-adic integral and leveraging the Lie correspondence, which states that a group and its finite index subgroups correspond to lattices of the same dimension. Consequently, taking tensor products with the field of fractions one obtains isomorphic structures.

\begin{theorem}\label{thm:subgroup formula}
	Let~$\Op$ be a compact discrete valuation ring of characteristic~$0$, residue characteristic~$p$, and of residue field cardinality~$q$. Let~$\mathfrak{p}$ be the valuation ideal, and let~$\pi$ be a uniformiser, let~$K$ be the field of fractions of~$\Op$, let~$\mathfrak{g}$ be an~$\Op$-Lie lattice, and let~$m\in \mathbb{N}$ be such that~$G=\exp(\pi^m\mathfrak{g})$ is a uniformly potent pro-$p$ group. Let~$\mathfrak{h}$ be an open potent sub-$\Op$-Lie lattice of $\mathfrak{g}$, corresponding to a potent subgroup~$H=\exp(\pi^m\mathfrak{h})\leqslant_c G$.
	
	Write $\mathfrak{g}_K = \mathfrak{g} \otimes_{\Op} K$ and~$\mathfrak{h}_K = \mathfrak{h} \otimes_{\Op} K$, and let~$\beta \colon \mathfrak{g}_K\wedge\mathfrak{g}_K \to \mathfrak{g}_K$ be the linear map induced by the Lie bracket of~$\mathfrak{g}$. Fix a $K$-linear isomorphism~$\xi \colon \mathfrak{g}_K \to \mathfrak{h}_K$ that restricts to an~$\Op$-linear map $\mathfrak{g} \to \mathfrak{h}$.

    If there exists a~$K$-linear map~$\psi \colon \mathfrak{g}_K \to \mathfrak{h}_K$
    such that
    \[  
        \beta(\xi\wedge\xi) = \psi\beta,
    \]  
    then $H$ is thetyspectral with factor~$|\det(\psi\xi^{-1})|_{\p}^{-1}$.
\end{theorem}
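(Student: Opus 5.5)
We express both zeta functions as $p$-adic integrals via \cref{thm:rel zeta integral} with the trivial subgroup, i.e.\ $\zeta_G(s)=\zeta^G_1(s+1)$ and $\zeta_H(s)=\zeta^H_1(s+1)$. Fix an $\Op$-basis $\mathfrak{Z}$ of $\mathfrak{g}$; then $\xi(\mathfrak{Z})$ is a $K$-basis of $\mathfrak{h}_K$. Since $\xi$ only maps $\mathfrak{g}$ into $\mathfrak{h}$, we first replace $\xi$ by an isomorphism of lattices. Choose an $\Op$-basis $\mathfrak{Z}'$ of $\mathfrak{h}$ and write $\xi=\eta\circ\alpha$, where $\eta\colon\mathfrak{g}\to\mathfrak{h}$ sends $\mathfrak{Z}$ to $\mathfrak{Z}'$ and $\alpha\in\GL(\mathfrak{g}_K)$ preserves $\mathfrak{g}$. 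The hypothesis only involves the brackets up to the factor $\psi$, so it is convenient to rewrite it as a statement about structure constants. With $x_i\in\mathfrak{Z}$,
\[
  [\xi x_i,\xi x_j]=\psi[x_i,x_j]=\psi\xi^{-1}\,\xi[x_i,x_j].
\]
Thus, in the basis $\xi(\mathfrak{Z})$ of $\mathfrak{h}_K$, the bracket of $\mathfrak{h}_K$ is the bracket of $\mathfrak{g}_K$ in the basis $\mathfrak{Z}$, followed by the linear map $\phi=\xi\psi\xi^{-1}$ transported to $\mathfrak{g}_K$ via $\xi$. Put differently, if $\Com_{\mathfrak{g}}(\mathfrak{Z},\mathfrak{Z})$ is the matrix of linear forms $[x_i,x_j]^\ast$, then the commutator matrix of $\mathfrak{h}_K$ in the basis $\xi(\mathfrak{Z})$ is $w\mapsto \Com_{\mathfrak{g}}(\mathfrak{Z},\mathfrak{Z})(\phi^{\ast}w)$, with $\phi^\ast$ the dual map on coordinates.

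Next we pass from the $K$-basis $\xi(\mathfrak{Z})$ to an honest $\Op$-basis of $\mathfrak{h}$. The integral in \cref{thm:rel zeta integral} for $H$ is over the whole $K$-space $\mathfrak{h}_K$ (with $\mathfrak{X}$ the full basis and $\mathfrak{Y}=\emptyset$), against the Haar measure normalised by $\mathfrak{h}$. Therefore a change of basis changes only the Haar normalisation, plus the way the Pfaffian minors transform. Under a linear change of basis $B$, the commutator matrix transforms by congruence $T\mapsto B^\intercal T B$, and the Pfaffian of a $2k$-principal minor becomes a combination of Pfaffian minors of the same size. The set of Pfaffian minors of fixed size therefore generates the same $\Op$-ideal after an $\Op$-unimodular change. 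This suggests that the quantity to track is, size by size, $\lVert\Pfaff_{2k}\rVert$. Since $\xi$ is not unimodular, the correct move is instead to do the substitution $w=\phi^\ast u$ inside the integral. This substitution is linear on $K^{\dim\mathfrak g}$ and scales Haar measure by $|\det\phi|_\p$, where $\det\phi=\det(\psi\xi^{-1})$.

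The key computation is then
\[
\zeta^H_1(s)=q^{m\dim}\int_{\mathfrak{h}_K}\lVert\Pfaff(\Com_{\mathfrak h}(w))\rVert^{-1-s}\,d\mu_{\mathfrak h}(w)
= c\cdot\int_{\mathfrak{g}_K}\lVert\Pfaff(\Com_{\mathfrak g}(u))\rVert^{-1-s}\,d\mu_{\mathfrak g}(u),
\]
with $c$ assembled from $|\det\phi|_\p^{-1}$ (the Jacobian) and the index-type factor relating $\mu_{\mathfrak h}$ under the identification $\xi$ to $\mu_{\mathfrak g}$. The latter factor should cancel against the way $\xi$ enters the coordinates, leaving the stated $|\det(\psi\xi^{-1})|_\p^{-1}$. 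Comparing with the integral for $\zeta^G_1$ gives $\zeta_H(s)=|\det(\psi\xi^{-1})|_\p^{-1}\zeta_G(s)$.

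The main obstacle is the bookkeeping between the two dualities: the integrand is a function on $\mathfrak{h}_K$ (dualised as in the remark after \cref{thm:rel zeta integral}), while $\phi$ acts on $\mathfrak{g}_K$. One must make sure that the Jacobian is $|\det\phi|$ and not its inverse, and that the normalisations of $\mu_{\mathfrak h}$ versus $\mu_{\mathfrak g}$ contribute $|\det\xi|$ factors that exactly cancel. I would first verify the bookkeeping on the trivial case $\mathfrak{h}=\pi\mathfrak{g}$, $\xi=\pi\cdot$, $\psi=\pi^2\cdot$. In that case the known answer (congruence subgroups, \cite[Proposition 6]{GJK14}) is a factor $q^{\dim\mathfrak g}=|\pi^{\dim}|^{-1}=|\det(\psi\xi^{-1})|^{-1}$, which fixes all signs of exponents.
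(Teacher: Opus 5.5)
Your route is the paper's: write $\zeta_G$ and $\zeta_H$ via \cref{thm:rel zeta integral} in the bases $\mathfrak{Z}$ and $\xi(\mathfrak{Z})$, show that the commutator matrix of $\mathfrak{h}$ is that of $\mathfrak{g}$ after a linear substitution, and read off the factor as a Jacobian. However, the proposal does not finish the argument. Your key computation leaves an undetermined constant $c$ containing an ``index-type factor'' that you only assert ``should cancel''. You also defer the direction of the Jacobian to a sanity check.

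The missing observation is that $\xi$ must restrict to an $\Op$-\emph{isomorphism} $\mathfrak{g}\to\mathfrak{h}$. The paper's proof uses exactly this (``$\xi(\mathfrak{X})$ is an $\Op$-basis for $\mathfrak{h}$''), and \cref{cor:subgroups} supplies it. Then $\xi(\mathfrak{Z})$ is an honest $\Op$-basis of $\mathfrak{h}$, and \cref{thm:rel zeta integral} applies to $H$ directly in these coordinates. Both integrals live on $K^d$, $d=\dim_{\Op}\mathfrak{g}$, with the same Haar measure ($\mu(\Op^d)=1$) and the same prefactor $q^{md}$. So there is no normalisation factor at all. Your discussion of unimodular base changes and the decomposition $\xi=\eta\circ\alpha$ are then superfluous.

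Concretely, let $M$ be the matrix of $\xi^{-1}\psi$ in the basis $\mathfrak{Z}$. Then $[\xi x_i,\xi x_j]=\sum_k c_{ij}^k\,\psi(x_k)$, and with $F(u)=\lVert\Pfaff(\Com(\mathfrak Z,\mathfrak Z)(u))\rVert_{\p}^{-2-s}$ one gets
\[
  \Com(\xi(\mathfrak Z),\xi(\mathfrak Z))(w)=\Com(\mathfrak Z,\mathfrak Z)(M^\intercal w),
  \qquad
  \int_{K^d}F(M^\intercal w)\,\mathrm d\mu(w)=|\det M|_{\p}^{-1}\int_{K^d}F(u)\,\mathrm d\mu(u).
\]
Here $\det M=\det(\psi\xi^{-1})$, since $\xi^{-1}\psi$ and $\psi\xi^{-1}$ are conjugate. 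Your $\phi=\xi\psi\xi^{-1}$ is a slip: its determinant is $\det\psi$, and the map to transport is $\xi^{-1}\psi$.

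Conversely, if $\xi(\mathfrak{g})$ is only a proper sublattice of $\mathfrak{h}$, nothing cancels and the conclusion fails. For example, $\mathfrak{h}=\mathfrak{g}$, $\xi=\pi\cdot\mathrm{id}$, $\psi=\pi^2\cdot\mathrm{id}$ satisfy $\beta(\xi\wedge\xi)=\psi\beta$, yet the statement would give $\zeta_G=q^{\dim\mathfrak g}\zeta_G$. So your attempt to handle a merely ``into'' map via $\xi=\eta\circ\alpha$ cannot succeed. The hypothesis has to be read as bijectivity of $\xi\colon\mathfrak{g}\to\mathfrak{h}$, and with that reading the proof is the two-line substitution above.
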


\begin{proof}
    We use the integral formalism introduced earlier, whence
	\[
		\zeta_G(s) = q^{m \dim_{\Op} \mathfrak{g}} \int_{K \mathfrak{X}} \lVert \Pfaff(\Com(\mathfrak{X}, \mathfrak{X})(w)) \rVert_{\p}^{-2-s} \mathrm{d}\mu(w),
	\]
	where~$\mathfrak{X}$ is an~$\Op$-basis for~$\mathfrak{g}$. Naturally, $\xi(\mathfrak{X})$ is an $\Op$-basis for $\mathfrak{h}$, and the entries of~$\Com(\xi(\mathfrak{X}), \xi(\mathfrak{X}))$ are of the form
	\[
		\mathrm{d}^{\xi(\mathfrak{X})}([\xi(x), \xi(x')]) = \mathrm{d}^{\xi(\mathfrak{X})}(\beta(\xi \wedge \xi)(x \wedge x')) = \mathrm{d}^{\xi(\mathfrak{X})}(\psi\beta(x \wedge x')) = \mathrm{d}^{\xi(\mathfrak{X})}(\psi[x, x'])
	\]
	for $x, x' \in \mathfrak{X}$, whence for $w \in K\mathfrak{X}$ we find
	\[
		\mathrm{d}^{\xi(\mathfrak{X})}([\xi(x), \xi(x')])(w) = \mathrm{d}^{\mathfrak{X}}([x, x'])(\psi^\ast\xi^{-1}(w)).
	\]
	Thus we find
	\begin{align*}
		\zeta_H(s) &= q^{m \dim_{\Op} \mathfrak{h}} \int_{ \xi(K\mathfrak{X})} \lVert \Pfaff(\Com(\xi(\mathfrak{X}), \xi(\mathfrak{X}))(w)) \rVert_{\p}^{-2-s} \mathrm{d}\mu(w)\\
		&= q^{m \dim_{\Op} \mathfrak{g}} \int_{K\mathfrak{X}} \lVert \Pfaff(\Com(\mathfrak{X}, \mathfrak{X})(\psi^\ast\xi^{-1}(w))) \rVert_{\p}^{-2-s} \mathrm{d}\mu(w),
	\end{align*}
	using $\dim_{\Op} \mathfrak{h} = \dim_{\Op} \mathfrak{g}$ and $\xi(K\mathfrak{X}) = K \mathfrak{X}$. Now a simple change of variables ${w \mapsto \psi^\ast\xi^{-1}(w)}$ yields
	\[
		\zeta_H(s) = |\det(\psi^\ast \xi^{-1})|_{\p}^{-1} \zeta_G(s).\qedhere
	\]
\end{proof}

\begin{corollary}\label{cor:subgroups}
	Let $\Op$ be a compact discrete valuation ring of characteristic $0$ and residue characteristic $p$, with uniformiser $\pi$. Let $\mathfrak{g}$ be a $3$-dimensional simple $\Op$-Lie lattice and let $\mathfrak{h}$ be an open $\Op$-Lie sublattice of $\mathfrak{g}$. Let $m$ be such that $G = \exp(\pi^m \mathfrak{g})$ is a uniformly potent pro-$p$ group with open potent subgroup $H = \exp(\pi^m \mathfrak{h})$. Then
    \[
        \zeta_H(s) = |G : H| \cdot \zeta_{G}(s).
    \]
\end{corollary}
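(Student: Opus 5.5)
The plan is to apply \cref{thm:subgroup formula}. Fix a $K$-linear isomorphism $\xi\colon \mathfrak{g}_K\to\mathfrak{h}_K = \mathfrak{g}_K$ mapping $\mathfrak{g}$ onto $\mathfrak{h}$; this exists because $\mathfrak{h}$ is open, hence a full-rank $\Op$-sublattice. Since $\mathfrak{h}_K=\mathfrak{g}_K$, we must then produce a linear $\psi$ with $\beta(\xi\wedge\xi)=\psi\beta$. The key structural input is that for a $3$-dimensional simple (hence perfect) Lie algebra over $K$, the bracket $\beta\colon \Lambda^2\mathfrak{g}_K\to\mathfrak{g}_K$ is a linear \emph{isomorphism}. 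It is surjective by perfectness, and both spaces have dimension $3$. Therefore $\psi:=\beta\circ\Lambda^2\xi\circ\beta^{-1}$ is well defined and satisfies the required identity.

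Next we compute the factor $|\det(\psi\xi^{-1})|_\p^{-1}$. Since $\beta$ is an isomorphism, $\det\psi=\det(\Lambda^2\xi)$. For a $3$-dimensional space, $\det(\Lambda^2\xi)=(\det\xi)^{2}$. Hence $\det(\psi\xi^{-1})=\det\xi$, and so $|\det(\psi\xi^{-1})|_\p^{-1}=|\det\xi|_\p^{-1}$. Because $\xi$ maps $\mathfrak{g}$ onto $\mathfrak{h}$, this equals $|\mathfrak{g}:\mathfrak{h}|$, via the elementary divisor theorem: the index of $\xi(\mathfrak{g})$ in $\mathfrak{g}$ is $q^{v_\p(\det\xi)}$.

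It remains to identify $|\mathfrak{g}:\mathfrak{h}|$ with $|G:H|$. Under the Lie correspondence the underlying sets are $\pi^m\mathfrak{g}$ and $\pi^m\mathfrak{h}$, so the index is $|\pi^m\mathfrak{g}:\pi^m\mathfrak{h}|=|\mathfrak{g}:\mathfrak{h}|$. This follows from the standard fact that for saturable groups the index of an open saturable subgroup equals the index of the corresponding lattices, for instance by comparing normalised Haar measures, which coincide with additive Haar measure on the lattice.

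The main obstacle is bookkeeping of normalisations in \cref{thm:subgroup formula}. We must check that the factor there is $|\det\xi|^{-1}$ rather than its inverse, in particular that the change of variables involves $\psi^\ast\xi^{-1}$ with the correct direction. The check is a sanity test: for $\mathfrak{h}=\pi\mathfrak{g}$, take $\xi=\pi\cdot\mathrm{id}$ and $\psi=\pi^2\cdot\mathrm{id}$. Then $\det(\psi\xi^{-1})=\pi^3$, giving factor $q^3=|G:H|$. This matches the known scaling for principal congruence subgroups, namely the $q^{m\dim\mathfrak{g}}$ prefactor in \cref{thm:rel zeta integral}, and so confirms the orientation.
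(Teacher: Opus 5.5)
Your proposal is correct and matches the paper's proof step for step. You apply \cref{thm:subgroup formula} with $\psi=\beta(\xi\wedge\xi)\beta^{-1}$, which is well defined because $\beta\colon\mathfrak{g}_K\wedge\mathfrak{g}_K\to\mathfrak{g}_K$ is invertible in dimension $3$. You then get $\det(\psi\xi^{-1})=\det\xi$ and identify $|\det\xi|_{\p}^{-1}$ with $|\pi^m\mathfrak{g}:\pi^m\mathfrak{h}|=|G:H|$; your added justification of the index identification and your orientation check with $\xi=\pi\cdot\mathrm{id}$ (the same check the paper makes right after the corollary) are both sound.
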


\begin{proof}	
	Write~$\mathfrak{g}_K=\mathfrak{g} \otimes_{\Op} K$, where~$K$ denotes the field of fractions of $\Op$. Since the linear map~$\beta \colon \mathfrak{g}_K \wedge \mathfrak{g}_K \to \mathfrak{g}_K$ induced by the Lie bracket of $\mathfrak{g}$ is non-degenerate, the image of~$\mathfrak{g} \wedge \mathfrak{g}$ is of finite index in~$\mathfrak{g}$.
	Since~$\dim_{\Op}(\mathfrak{g}) = 3$, the dimension of the exterior square~$\mathfrak{g} \wedge \mathfrak{g}$ is also~$3$. Thus as a~$K$-linear map,~$\beta$ is invertible.
	Denote by $\xi \colon \mathfrak{g}_K \to \mathfrak{h}_K$ a $K$-linear isomorphism restricting to a $\Op$-linear bijection $\mathfrak{g} \to \mathfrak{h}$.	Clearly the~$K$-linear map~$\psi \colon \mathfrak{g}_K \to \mathfrak{h}_K$ defined by the conjugate
	\[
		\psi =\beta(\xi \wedge \xi) \beta^{-1}
	\]
	meets the conditions of~\cref{thm:subgroup formula}. Its determinant satisfies
	\[
		\det(\psi) = \det(\xi \wedge \xi) = \det(\xi)^{\dim_{\Op} (\mathfrak{g}) - 1} = \det(\xi)^2,
	\]
	thus~$\det(\psi\xi^{-1}) = \det(\xi)$ and we have~$|\det(\psi\xi^{-1})|_\mathfrak{p}^{-1} = |\pi^m\mathfrak{g}:\pi^m\mathfrak{h}| = {|G : H|}$.
\end{proof}

Note that there are two simple $3$-dimensional $\Z_p$-Lie algebras; the familiar $\mathfrak{sl}_2(\Z_p)$ and the algebra of vanishing reduced trace elements $\mathfrak{sl}_1(\Delta_p)$ in a $p$-adic quaternion algebra $\mathbb{D}_p$ with maximal $\Z_p$-order $\Delta_p$, see \cite{Klo03,Rie70,KLG97}. The representation zeta functions of both families of the corresponding $p$-adic analytic groups have been computed in \cite{AKOV12}.

\cref{thm:subgroup formula} gives another interpretation of the fact that in \cref{thm:rel zeta integral}, the congruence level $m$ enters only as the power of a constant factor. Note that this fact, first described in~\cite{Jai05}, has been used to much effect, e.g., the main result in~\cite{GJK14} uses it heavily. In the language of the previous theorem, the lattice corresponding to the $(m+k)$\textsuperscript{th} principal congruence subgroup is the $(\pi^k)$\textsuperscript{th} power of the lattice corresponding to the $m$\textsuperscript{th} principal congruence subgroup $\mathfrak{g}$. The map $\xi$ is thus given by scalar multiplication with~$\pi^k$ and
\[
	(v \wedge w)(\xi\wedge\xi)\beta = [\pi^k v, \pi^k w] = \pi^{2k} [v,w] =(v,w)\beta \xi^2,\quad \text{for } v, w \in \mathfrak{g}.
\]
Thus we may choose~$\xi^2$ as our~$\psi$. Clearly~$\det(\psi\xi^{-1}) = \pi^{dk}$ and~$|\det(\xi)|_{\mathfrak{p}}^{-1}= q^{dk}$.

While \cref{cor:subgroups} will be of most use for us, \cref{thm:subgroup formula} can be also used to compute the representation zeta functions of many subgroups of groups of dimension greater than~$3$. In the following we conduct some computations that allow us to describe some thetyspectral subgroups of~$G = \SL_3^m(\Op)$, with permissible~$m$ for~$\mathfrak{sl}_3(\Op)$. The method, however, is not tied to the group and can be used in greater generality. The group $G$ corresponds to the the~$\mathcal{O}$-Lie lattice $\mathfrak{g} = \pi^m \mathfrak{sl}_3(\Op)$ of~$3\times3$-matrices with trace zero, which has the $\Op$-basis $\mathfrak{X} = \{h_{12}, h_{23}, e_{12}, e_{13}, e_{23}, f_{21}, f_{31}, f_{32}\}$, where, for the pairs $i,j$ as described,
 \begin{align*}
    h_{ij} &= E_{i,i}-E_{j,j}, & e_{ij}&= E_{i,j}, & f_{ji}&= E_{j,i},
\end{align*} 
and where $E_{i,j}$ denotes the~$3\times3$-matrix with entry~$\pi^m$ at position~$(i,j)$ and zero otherwise.

We consider sublattices of $\mathfrak{g}$ generated by
\[
	\{\pi^{k_1} h_{12}, \pi^{k_2} h_{23}, \pi^{k_3} e_{12}, \pi^{k_4} e_{13}, \pi^{k_5} e_{23}, \pi^{k_6} f_{21}, \pi^{k_7} f_{31}, \pi^{k_8} f_{32}\},
\]
i.e.\ the case that the map $\xi$ is given by a diagonal matrix of the form $\diag(\pi^{k_1}, \dots, \pi^{k_8})$, for some~${k_1, \dots, k_8 \in \N_0}$. It is easy to see that~$\xi \wedge \xi$ may be represented by
\[\diag(\pi^{k_1+k_2}, \pi^{k_1+k_3}, \dots, \pi^{k_7+k_8}),\]
using the standard ordering for the basis induced by~$\mathfrak{X}$ on the exterior square. Comparing the matrix representation of the bracket~$\beta$ and the disturbed~$(\xi \wedge \xi)\beta$, we find that
\[
    [\xi(e_{12}),\xi(f_{21})] = \pi^{k_3+k_6} h_{12} = \psi(h_{12}) = \psi([e_{12}, f_{21}])
\]
has to hold for any $\psi$ as in \cref{thm:subgroup formula}, as $[e_{12}, f_{21}] = h_{12}$. At the same time, $[e_{13}, f_{31}] = h_{12}$, whence $\psi(h_{12}) = \pi^{k_4+k_7} h_{12}$ and $k_3+k_6 = k_3 + k_7$. Going through each occurrence of each variable in the commutator matrix, we obtain a linear set of equations for the integers $k_i$ for~$i \in [8]$, which may minimally be described by
\begin{equation}\label{eq:condition on k}
\begin{aligned}
    k_1 = k_2 &= k_6 - k_7 + k_8, & k_3 &= 2k_1 - k_6, \\ k_4 &= 2k_1 - k_7, & k_5 &= 2k_1 - k_8.
\end{aligned}
\end{equation}
Now, additionally, we need that $\xi(\mathfrak{g})$ is actually a sublattice, i.e.\ closed under the bracket, which translates into a list of inequalities between the integers $k_i$ with~$i \in [8]$. Together with \eqref{eq:condition on k}, they reduce to the single inequality that $k_1 \geq 0$, which holds vacuously by the choice of $k_1 \in \N_0$. 

\begin{proposition}\label{prop:thethys in sl3}
    Let~$p$ be an odd prime and let~$\Op$ be a compact discrete valuation ring of characteristic~$0$ with residue field cardinality~$q$, a power of~$p$, valuation ideal~$ \mathfrak{p}$, and choose an uniformiser~$\pi$. Write~$G=\SL_3^m(\Op)$, for permissible~$m\in \mathbb{N}$, and let~$H$ be the subgroup of~$G$ corresponding to the~$\Op$-Lie sublattice spanned by
    \[
        \pi^{k_1} h_{12}, \pi^{k_2} h_{23}, \pi^{k_3} e_{12}, \pi^{k_4} e_{13}, \pi^{k_5} e_{23}, \pi^{k_6} f_{21}, \pi^{k_7} f_{31}, \pi^{k_8} f_{32},
    \]
    where the elements $k_i \in \N_0$ for~$i \in \{1, \dots, 8\}$ denote non-negative integers adhering to~\eqref{eq:condition on k}. Then~$H$ is thetyspectral with factor~$|G:H|$.
\end{proposition}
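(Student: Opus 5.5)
The plan is to apply \cref{thm:subgroup formula} with $\xi = \diag(\pi^{k_1},\dots,\pi^{k_8})$ with respect to the basis $\mathfrak{X}$. Since all $k_i \geq 0$, $\xi$ restricts to an $\Op$-linear bijection from $\mathfrak{g}$ onto the lattice $\mathfrak{h}$ spanned by the listed elements. Three things must be checked: that $\mathfrak{h}$ is a potent Lie sublattice, so that $H$ is a potent subgroup and the integral formula applies; that a $K$-linear $\psi$ with $\beta(\xi\wedge\xi)=\psi\beta$ exists; and that the resulting factor $|\det(\psi\xi^{-1})|_\p^{-1}$ equals $|G:H|$.

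\emph{Existence of $\psi$.} Take $\psi$ diagonal in the basis $\mathfrak{X}$, with eigenvalue $\pi^{c_x}$ on each $x\in\mathfrak{X}$. For each pair of basis vectors $x,x'$, the bracket $[x,x']$ is a multiple of a single basis vector, except for $[e_{ij},f_{ji}]=h_{ij}$ and $[e_{13},f_{31}]=h_{12}+h_{23}$. The condition $\beta(\xi\wedge\xi)=\psi\beta$ therefore says that $k_x+k_{x'}=c_{z}$ whenever $z$ occurs in $[x,x']$. I would set
\[
c_{h_{12}}=c_{h_{23}}=2k_1,\qquad c_{e_{12}}=k_1+k_3,\ \dots
\]
and read off the remaining $c$'s from the brackets $[h,e]$ and $[h,f]$. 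I would then verify each of the finitely many bracket relations using \eqref{eq:condition on k}. For instance, $[e_{12},e_{23}]=e_{13}$ requires $k_3+k_5=k_1+k_4$, which becomes $4k_1-k_6-k_8=3k_1-k_7$, i.e.\ $k_1=k_6-k_7+k_8$, as required. The relation $[e_{13},f_{31}]=h_{12}+h_{23}$ forces $c_{h_{12}}=c_{h_{23}}$, which is consistent with $k_1=k_2$. The observation that makes everything consistent is that $\psi=\xi^2$ on the Cartan part, while on root vectors $c_x=k_1+k_x$ holds because $k_x+k_{-x}=2k_1$. This bookkeeping is the main (though routine) obstacle.

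\emph{Determinant.} With $c_x=k_1+k_x$ for the six root vectors and $c_h=2k_1=k_1+k_h$ for $h_{12},h_{23}$, we get $\psi\xi^{-1}=\pi^{k_1}\cdot\mathrm{id}$. So $\det(\psi\xi^{-1})=\pi^{8k_1}$, and the factor is $q^{8k_1}$. On the other hand,
\[
|G:H|=|\det\xi|_\p^{-1}=q^{\sum_i k_i},
\]
and \eqref{eq:condition on k} gives $\sum_i k_i=2k_1+(k_3+k_6)+(k_4+k_7)+(k_5+k_8)=8k_1$. The two agree.

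\emph{Sublattice and potency.} Closure of $\mathfrak{h}$ under the bracket amounts to the inequalities $k_x+k_{x'}\geq k_z$ for each $z$ occurring in $[x,x']$. Given the relations, these inequalities reduce to $k_1\geq0$ and the nonnegativity of the $k_i$, as indicated before the statement. Potency should follow because $\mathfrak{h}\subseteq\mathfrak{g}=\pi^m\mathfrak{sl}_3(\Op)$ and $\gamma_{p-1}$ is monotone; a cleaner route is to note that $[\mathfrak{h},\mathfrak{h}]\subseteq\pi^{k_1}\cdot\pi^m\xi(\mathfrak{sl}_3)$-type estimates, obtained via $\psi$, give $\gamma_{p-1}(\mathfrak{h})\subseteq p\mathfrak{h}$ exactly as for $\mathfrak{g}$ at permissible level $m$. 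Then \cref{thm:subgroup formula} gives $\zeta_H(s)=|G:H|\,\zeta_G(s)$.
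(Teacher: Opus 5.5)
Your proposal is correct and follows the paper's route. You apply \cref{thm:subgroup formula} with the diagonal $\xi=\diag(\pi^{k_1},\dots,\pi^{k_8})$, solve the bracket-compatibility conditions for a diagonal $\psi$, and check closure via $k_1\geq 0$; your observation $\psi=\pi^{k_1}\xi$ makes the factor $q^{8k_1}=|G:H|$ explicit, which the paper leaves implicit.

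One caution on potency: monotonicity of $\gamma_{p-1}$ only gives $\gamma_{p-1}(\mathfrak h)\subseteq p\mathfrak g$, not $p\mathfrak h$, so drop that remark. Use your second argument instead: $[\xi a,\xi b]=\pi^{k_1}\xi[a,b]$ gives $\gamma_{p-1}(\mathfrak h)=\pi^{(p-2)k_1}\xi(\gamma_{p-1}(\mathfrak g))\subseteq p\mathfrak h$.
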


Note that all subgroups arising in this way -- i.e.\ that are subject to the conditions above -- are of index~$q^{\dim_{\Op} \SL_3(\Op) n}$ in~$\SL_3^m(\Op)$, for some positive integer~$n \in \N$. At the same time, the proposition only deals with a very special kind of subgroup.

\begin{problem}
	Find an open subgroup of $\SL_3^m(\Op)$ that is not thetyspectral.
\end{problem}

Clearly, the method described above can be used in the hope of obtaining thetyspectral subgroups in other uniformly potent pro-$p$ groups. An example of a non-the\-ty\-spec\-tral subgroup in $\SL_2^1(\Op)\ltimes \Op^2$ is given in \cref{sec:a_non_thetyspectral_subgroup}.

\subsection{Lemmata on potency}

In order to give a proof of \cref{thm:main}, we need to establish that certain subgroups of potent pro-$p$ groups are again potent to make use of \cref{thm:subgroup formula}.

\begin{lemma}\label{lem:potent intersection}
    Let~$G$ be a uniformly potent pro-$p$~group. Let~$H$ and~$K \leq G$ be two closed potent subgroups. Then~$H \cap K$ is potent.
\end{lemma}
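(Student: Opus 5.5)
We transfer the problem to Lie lattices through the Lazard correspondence for the saturable group~$G$, and then check the potency condition on the lattice side. Write $\mathfrak{g}=\log(G)$. We use the standard fact that closed potent subgroups of a uniformly potent group are saturable, cf.~\cite{Gon07,GK09}. Hence $H$ and $K$ correspond to $\Z_p$-Lie sublattices $\mathfrak{h}=\log(H)$ and $\mathfrak{k}=\log(K)$ of $\mathfrak{g}$. Moreover, a closed subgroup $U$ of $G$ which is saturable has potent group structure exactly when $\log(U)$ is a potent Lie lattice. Since $\log$ is a bijection, $\log(H\cap K)=\mathfrak{h}\cap\mathfrak{k}$ as sets. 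The intersection $H\cap K$ is a closed subgroup of~$G$, and in particular a finitely generated, torsion-free pro-$p$ group.

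\textbf{Step 1: show that $\mathfrak{h}\cap\mathfrak{k}$ is a potent $\Z_p$-Lie lattice.} For odd $p$ we need $\gamma_{p-1}(\mathfrak{h}\cap\mathfrak{k})\subseteq p(\mathfrak{h}\cap\mathfrak{k})$. Monotonicity of the lower central series gives
\[
\gamma_{p-1}(\mathfrak{h}\cap\mathfrak{k})\subseteq\gamma_{p-1}(\mathfrak{h})\cap\gamma_{p-1}(\mathfrak{k})\subseteq p\mathfrak{h}\cap p\mathfrak{k}.
\]
Since $\mathfrak{g}$ is torsion-free, multiplication by $p$ is injective, so $p\mathfrak{h}\cap p\mathfrak{k}=p(\mathfrak{h}\cap\mathfrak{k})$. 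Indeed, if $px=py$ with $x\in\mathfrak{h}$ and $y\in\mathfrak{k}$, then $x=y$. For $p=2$ the identical argument applies with $\gamma_2$ and $4$ in place of $\gamma_{p-1}$ and~$p$.

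\textbf{Step 2: transfer back to $H\cap K$, which is where the real work lies.} Potency of the lattice $\mathfrak{h}\cap\mathfrak{k}$ has to yield potency of the group $H\cap K$. The issue is that $\mathfrak{h}\cap\mathfrak{k}$, although closed under the Lie operations inherited from $\mathfrak{g}$, must be identified with the lattice attached to $H\cap K$ by the Lazard correspondence.

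First, $\mathfrak{h}\cap\mathfrak{k}$ is a potent sublattice of the saturable lattice $\mathfrak{g}$, hence itself saturable. Therefore $\exp(\mathfrak{h}\cap\mathfrak{k})$ is a subgroup of $G$; its multiplication is given by the Hausdorff series, which keeps it inside both $\mathfrak{h}$ and~$\mathfrak{k}$. As a set this subgroup equals $H\cap K$, so $\log(H\cap K)=\mathfrak{h}\cap\mathfrak{k}$ as Lie lattices. By the dictionary between potent groups and potent lattices in~\cite{GK09}, the group $H\cap K$ is then potent.

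If one prefers to avoid this dictionary, one can argue directly on the group side. By monotonicity, $\gamma_{p-1}(H\cap K)\subseteq H^p\cap K^p$. In a saturable group the $p$-th power map is injective, so it remains to show $H^p\cap K^p=(H\cap K)^p$. Here one uses that in potent groups $H^p=\{h^p\mid h\in H\}$, and likewise for~$K$. So an element of $H^p\cap K^p$ has the form $x=h^p=k^p$, and injectivity of the $p$-th power map forces $h=k\in H\cap K$.
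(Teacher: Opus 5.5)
Your proposal is correct. Its final paragraph, the direct group-side argument, is essentially the paper's own proof. The paper also argues
$\gamma_{p-1}(H\cap K)\le\gamma_{p-1}(H)\cap\gamma_{p-1}(K)\le H^p\cap K^p$ and then obtains $H^p\cap K^p=(H\cap K)^p$ by appealing to unique $p$th roots. Your version is more precise, because it states the two facts that step actually needs:
\begin{itemize}
\item[(a)] $H^p=\{h^p\mid h\in H\}$ for the potent subgroup $H$, and likewise for $K$;
\item[(b)] the map $x\mapsto x^p$ is injective on the torsion-free saturable group $G$.
\end{itemize}
For $p=2$ the same argument goes through with fourth powers, since $H^4=\{h^4\mid h\in H\}$.

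Your main route through Lie lattices is a genuinely different proof and is also valid. It replaces the facts about powers in potent groups with heavier imported machinery:
\begin{itemize}
\item[(i)] closed potent subgroups of $G$ are saturable;
\item[(ii)] for a saturable subgroup $U\le G$, the set $\log(U)$ is a Lie sublattice of $\mathfrak{g}$ carrying the induced structure;
\item[(iii)] a saturable group is potent if and only if its Lie lattice is potent. You need both directions: group to lattice for $H$ and $K$ in Step~1, and lattice to group for $H\cap K$ in Step~2.
\end{itemize}
Once these are granted, Step~1 reduces to the trivial identity $p\mathfrak{h}\cap p\mathfrak{k}=p(\mathfrak{h}\cap\mathfrak{k})$, which is the lattice counterpart of $H^p\cap K^p=(H\cap K)^p$. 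Step~2 follows immediately from the bijectivity of $\log$.

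So the lattice route is conceptually clean, but all of its real content sits in the cited potent-group/potent-lattice dictionary. The group-side argument is shorter and needs only facts (a) and (b) above.
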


\begin{proof}
    It is a general fact that $\gamma_{p-1}(H \cap K) \leq \gamma_{p-1}(H) \cap \gamma_{p-1}(K)$. Since $H$ and $K$ are potent, this implies $\gamma_{p-1}(H \cap K) \leq H^p \cap K^p$. Potent pro-$p$ groups have unique~$p$\textsuperscript{th} roots, whence $H^p \cap K^p = (H \cap K)^p$. This concludes the proof.
\end{proof}

\begin{lemma}\label{lem:potent point stabs}
    Let~$\mathfrak{g}$ be an $\Op$-Lie lattice, let $m$ be soundly permissible for $\mathfrak{g}$, and let~$\sigma \colon \mathfrak{g} \to \mathfrak{p}^m\mathfrak{gl}_n$ be a faithful $\Op$-Lie homomorphism. Let~$\chi$ be an irreducible complex representation of the group $\Op^n$ and let $G = \exp \mathfrak{p}^m \mathfrak{g}$. The stabiliser $\Stab_G(\chi)$ of~$\chi$ under the action of $G$ on~$\Irr(\Op^n)$ via~$\exp \circ \sigma \circ \log$ is uniformly potent.
\end{lemma}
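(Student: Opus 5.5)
The plan is to realise $\Stab_G(\chi)$ as an intersection of two closed potent subgroups of a uniformly potent ambient group and then apply \cref{lem:potent intersection}. Torsion-freeness and finite generation are automatic for closed subgroups of the uniform group $G$, so only potency needs an argument.

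First I fix the ambient group. Since $m$ is soundly permissible, \cref{lem:semidirect potent} shows that $\tilde G = \exp(\mathfrak{p}^m(\mathfrak{g}\oplus\Op^n)) \cong G\ltimes \mathfrak{p}^m\Op^n$ is uniformly potent. Irreducible representations of the abelian group $\Op^n$ are linear characters. Each such character is trivial on some congruence sublattice, so after rescaling it corresponds to a linear character of $V=\mathfrak{p}^m\Op^n$; we may therefore treat $\chi$ as a character of $V$ and let $G$ act on it by $\exp\circ\sigma\circ\log$.

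The key observation concerns the group $\tilde G_\chi = \Stab_G(\chi)\ltimes V$, which is a closed subgroup of $\tilde G$ with $\tilde G_\chi\cap G = \Stab_G(\chi)$. By Mackey theory, the extension $\Ext_V^{\tilde G_\chi}(\chi)$ described before \cref{prop:reps of sd prod} is a linear character of $\tilde G_\chi$ that restricts to $\chi$ on $V$.

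\begin{itemize}
\item I would show that $\tilde G_\chi$ is potent by exhibiting it as a stabiliser in the Kirillov orbit method. Extend $\chi$ to a linear functional $\omega\in\mathrm{Hom}(\mathfrak{p}^m(\mathfrak{g}\oplus\Op^n),\mathbb{Q}_p/\mathbb{Z}_p)$ that vanishes on $\mathfrak{p}^m\mathfrak{g}$. The coadjoint stabiliser of $\omega$ in $\tilde G$ is $\exp$ of the Lie sublattice $\mathfrak{s}_\omega=\{x : \omega([x,\cdot])=0\}$.
\item Potency of a stabiliser lattice is the main obstacle. For $x\in\mathfrak{s}_\omega$ and $y_1,\dots,y_{p-2}$ arbitrary, the long commutator lies in $\gamma_{p-1}(\mathfrak{p}^m(\mathfrak{g}\oplus\Op^n))$. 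This lies in $p\,\mathfrak{p}^m(\mathfrak{g}\oplus\Op^n)$, and I need it to lie in $p\,\mathfrak{s}_\omega$.
\item A direct route is cleaner. I would show $\gamma_{p-1}(\mathfrak{s})\subseteq p\,\mathfrak{s}$ for $\mathfrak{s}=\log\Stab_G(\chi)$, where $\mathfrak{s}$ is the Lie subalgebra $\{x\in\mathfrak{p}^m\mathfrak{g}:\chi\circ\sigma(x)=0\}$ as linear condition on $V$ modulo the kernel.
\end{itemize}

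For the direct route I use the computation in the proof of \cref{lem:semidirect potent}. Since $\sigma$ lands in $\mathfrak{p}^m\mathfrak{gl}_n$, we have $\gamma_{p-1}$ of $\mathfrak{p}^m\mathfrak{g}$ contained in $\mathfrak{p}^{m+1}\mathfrak{g}$. By permissibility it lies in $(\mathfrak{p}^m\mathfrak{g})^p$, which equals $p\,\mathfrak{p}^m\mathfrak{g}$ up to the condition $m\ge e$. The remaining point is divisibility inside $\mathfrak{s}$. If $z=pz'$ with $z\in\mathfrak{s}$, then the defining condition is $\chi(\sigma(z)v)=1$ for all $v$. This gives $\chi(p\,\sigma(z')v)=1$, i.e.\ $\chi^p$ is fixed by $\exp z'$, and this does \emph{not} force $z'\in\mathfrak{s}$. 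That is exactly where the naive argument breaks.

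To fix this, I would replace $\mathfrak{s}$ by the intersection of two potent groups, $\Stab_G(\chi)=\tilde G_\chi\cap G$. Here $G$ is potent by permissibility. For $\tilde G_\chi$, I would use that the stabiliser of a character of a normal abelian subgroup in a uniformly potent group is the stabiliser of a coadjoint orbit point. By the Kirillov orbit method for potent groups \cite{Gon09}, such stabilisers $\exp(\mathfrak{s}_\omega)$ are potent, because $\mathfrak{s}_\omega$ is the radical of a bilinear form $\omega([\cdot,\cdot])$ and Gonz\'alez-S\'anchez shows these radicals give potent subgroups. \cref{lem:potent intersection} then yields potency of $\Stab_G(\chi)$. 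Checking that $\tilde G_\chi\cap G$ really equals $\Stab_G(\chi)$, and not something larger, is routine and follows from $\omega|_{\mathfrak{p}^m\mathfrak g}=0$.
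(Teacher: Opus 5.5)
\textbf{Gap 1: potency of $\mathfrak{s}_\omega$.} This is the decisive step, and it is not proved. You only attribute it to \cite{Gon09}, and ``it is the radical of $\omega([\cdot,\cdot])$'' explains nothing. The obstruction you found yourself for $\mathfrak{s}$ (from $pz'\in\mathfrak{s}$ one cannot infer $z'\in\mathfrak{s}$) applies verbatim to $\mathfrak{s}_\omega$.

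The missing idea is to use that the element is a $(p-1)$-fold commutator, so one can divide by $p$ \emph{inside} before applying the defining condition. Put $\tilde{\mathfrak g}=\mathfrak{p}^m(\mathfrak{g}\oplus\Op^n)$, which is potent by \cref{lem:semidirect potent}. Let $c=[x_1,\dots,x_{p-1}]$ with $x_i\in\mathfrak{s}_\omega$, so $c=pc'$ with $c'\in\tilde{\mathfrak g}$. Since $\mathrm{ad}(c)$ is the corresponding iterated commutator of the operators $\mathrm{ad}(x_i)$, for every $y$ the element $[c,y]$ is a signed sum of terms $[x_{i_1},t]$ with $t=[x_{i_2},[\dots,[x_{i_{p-1}},y]\dots]]\in\gamma_{p-1}(\tilde{\mathfrak g})\subseteq p\,\tilde{\mathfrak g}$. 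Write $t=pt'$ and cancel $p$ to get $[c',y]=\sum\pm[x_{i_1},t']$. Hence $\omega([c',y])=0$, i.e.\ $c'\in\mathfrak{s}_\omega$. Such $c$ topologically span $\gamma_{p-1}(\mathfrak{s}_\omega)$, so $\gamma_{p-1}(\mathfrak{s}_\omega)\subseteq p\,\mathfrak{s}_\omega$ (for $p=2$ use $\gamma_2$ and $4$).

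The same trick rescues your abandoned direct route, with no detour through $\tilde G$. For $x_i\in\mathfrak{s}$, each word in the expansion of $\sigma([x_1,\dots,x_{p-1}])$ is $\sigma(x_{i_1})$ times a product of $p-2$ factors in $\mathfrak{p}^{m(p-2)}\mathfrak{gl}_n(\Op)\subseteq p\,\mathfrak{gl}_n(\Op)$, because $m\geq e$.

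\textbf{Gap 2: $\tilde G_\chi$ is not a coadjoint stabiliser.} Write elements of $\tilde G$ as $gv$ with $g\in G$, $v\in V$, and identify $V$ with its Lie lattice. Then $\mathrm{Ad}(v)(x+w)=x+w+[v,x]$, so $gv$ fixes $\omega$ if and only if $g\in\Stab_G(\chi)$ \emph{and} $\omega([v,\mathfrak{p}^m\mathfrak{g}])=0$. Thus $\Stab_{\tilde G}(\omega)=\Stab_G(\chi)\ltimes V_\omega$, with $V_\omega$ in general a proper subgroup of $V$. In fact $V$ fixes no extension of $\chi$ at all unless $G$ fixes $\chi$. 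This is harmless once you intersect $\Stab_{\tilde G}(\omega)$ itself with $G$. Since $\mathrm{Ad}(g)$ preserves both summands and $\omega$ vanishes on $\mathfrak{p}^m\mathfrak{g}$, that intersection is $\Stab_G(\chi)$, and \cref{lem:potent intersection} applies.

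\textbf{Comparison with the paper.} Once repaired, your argument is a genuinely different proof. The paper normalises $\chi$ to $(z,1,\dots,1)$ and writes $\Stab_{\GL_n^m(\Op)}(\chi)=\GA^m_{n-1}(\Op)\cdot T^k$ explicitly. It then checks via a commutator identity that this group is even powerful, and intersects with $G$ inside $\GL_n^m(\Op)$. Your route is coordinate-free and needs only the potency of the ambient lattice. However, it depends on the orbit-method identity $\Stab_{\tilde G}(\omega)=\exp(\mathfrak{s}_\omega)$ and on passing from potent lattices to potent groups. The paper's computation is more elementary and yields powerfulness of the larger stabiliser.
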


\begin{proof}
    Put $S = \Stab_{\GL_n^m(\Op)}(\chi)$. We will show that $S$ is uniformly powerful and thus uniformly potent in particular; whence by \cref{lem:potent intersection} also $\Stab_G(\chi) = S \cap G$ is uniformly potent.

    The group $\Irr(\Op^n)$ is isomorphic to the $n$\textsuperscript{th} direct power of the Prüfer group of all roots of unity of $p$-power order $\mathbb{C}_{p^{\infty}}^n$ and every $\GL_n(\Op)$-orbit contains an element of the form $(z, 1, \dots, 1)$, for some $z \in \mathbb{C}_{p^\infty}$. Replacing $\sigma$ with $g \mapsto (g^\sigma)^h$ and the stabiliser with its conjugate under $h$ for some $h \in \GL_n(\Op)$ if necessary, we can assume that $\chi$ is equivalent to an element of the given form. Let $p^k = q$ be the order of $z$. Clearly, the $k$\textsuperscript{th} principal congruence subgroup stabilises $(z, 1, \dots, 1)$; thus $S$ is a finite index subgroup. Furthermore, $S = \GL_n^m(\Op)$ is powerful in case $k < m$. Otherwise, a quick computation shows that $S$ is the set of elements in $\GL_n^m(\Op)$ of the form
    $$
        s = \begin{pmatrix}
            1+\pi^k a & \pi^k x\\
            \pi^m y^\intercal & M,
        \end{pmatrix},
    $$
    for $a \in \Op$, $x, y \in \Op^{n-1}$, and $M \in \GL_{n-1}^m(\Op)$. One sees that
    $$
        S = \GA_{n-1}^m(\Op) \cdot T^k.
    $$
    Here, the $m$\textsuperscript{th} principal congruence subgroups of general affine group $\GA_{n-1}^m(\Op) = \GL_{n-1}^m(\Op) \ltimes \pi^m\Op^{n-1}$ is represented by matrices of the form
    $$
        \begin{pmatrix}
            1 & 0\\
            \pi^m y^\intercal & M
        \end{pmatrix},
    $$
    with $y \in \Op^{n-1}$ and $M \in \GL_{n-1}^m(\Op)$, and $T^k$ denotes the abelian group of matrices of the form 
    $$
        \begin{pmatrix}
            1 + \pi^k \Op & \pi^k x\\
            0 & \operatorname{Id}_{(n-1)\times(n-1)}
        \end{pmatrix},
    $$
    with $x \in \Op^{n-1}$. The $m$\textsuperscript{th} principal congruence subgroup of the general affine group is a semi-direct product of a powerful group with an abelian normal subgroup, whence it is powerful by \cref{lem:semidirect potent}. Let $g, h \in \GA_{n-1}^m(\Op)$ and $a, b \in T_k$, then
    $$
        [ga, hb] = [g, b]^a [g, h]^a [a, h]^b.
    $$
    Consider the case of an odd prime $p$. Since $\GA_{n-1}^m(\Op)$ is powerful, the middle term is contained in $\GA^m_{n-1}(\Op)^p \leq S^p$. The other two terms are contained in $[\GL_n^{m}(\Op), \GL_n^{k}(\Op)] \leq \GL_n^{k+m}(\Op)$, which is contained in $\GL_n^{k+1}(\Op) = \GL_n^k(\Op)^{p} \leq S^{p}$. Thus, every commutator in $S$ is contained in $S^p$, and $S$ is powerful. For $p = 2$, the proof works analogously.
\end{proof}

\subsection{Product formula for representation zeta functions of semi-direct products}

We are now able to prove our main result.

\begin{proof}[Proof of \cref{thm:main}]
    Let~$\mathcal{X}$ be a set of representatives of the~$H$-orbits of~$\Irr(\Op^n)$. By \cref{prop:reps of sd prod}, all irreducible representations of~$G$ are parametrized by the representatives~$\chi\in \mathcal{X}$ and the irreducible representations of the stabilisers~$S_\chi:=\Stab_H(\chi)$ of~$\chi\in \mathcal{X}$, and are of the form
    \begin{equation*}
        \Ind_{S_\chi\ltimes V }^{G}(\Inf_{S_\chi}^{ S_\chi \ltimes V }(\tau) \otimes \Ext_{V}^{ S_\chi \ltimes V}(\chi))
    \end{equation*}
    for some $\tau \in \Irr(S_\chi)$. The dimension of such a representation is given by the product~$|H : S_\chi| \cdot \dim(\tau)$. Thus
    \[
        r_n(G) = |\Irr_n(G)| = \sum_{\substack{a, b \in \N\\ ab = n}} \sum_{\substack{\chi \in \mathcal{X}\\ |H : S_\chi| = a}} r_b(S_\chi).
    \]
    By \cref{lem:potent point stabs} the group~$S_\chi$ is a uniformly potent pro-$p$ group, hence \cref{cor:subgroups} implies~$r_b(S_\chi) = |H : S_\chi| \cdot r_b(H)$.
    
    Now to every~$\chi \in \mathcal{X}$ we may associate the~${|H:S_\chi|}$\nobreakdash- dimensional representation $\Ind_{ S_\chi\ltimes \Op^n}^G(\Ext_{\Op^n}^{ S_\chi\ltimes \Op^n} (\chi))$. In view of \cref{prop:consts of idrep}, these are precisely the~\mbox{$|H:S_\chi|$\nobreakdash-}di\-men\-sion\-al irreducible constituents of~$\Ind_H^G(\idrep_H)$, hence
    \begin{align*}
        r_n(G) = \sum_{\substack{a, b \in \N\\ ab = n}} \sum_{\substack{\chi \in \mathcal{X}\\ |H : S_\chi| = a}} a \cdot r_b(H) = \sum_{\substack{a, b \in \N\\ ab = n}} r_a(G,H) \cdot a \cdot r_b(H).
    \end{align*}
    We see that the numbers~$r_n(G)$ result from the Dirichlet convolution of the arithmetic sequences~$a \cdot r_a(G,H)$, with~$a\in\mathbb{N}$ and~$r_b(H)$, with~$b\in \mathbb{N}$. The factor~$a$ corresponds to a shift in the Dirichlet generating function of the sequence~$r_a(G,H)$, i.e.~$\sum_{a \in \N} r_a(G,H) \cdot a \cdot a^{-s} = \zeta_{H}^G(s-1)$. Since the generating function of a Dirichlet convolution is the product of the corresponding generating functions, this concludes the proof.
\end{proof}

\section{Uniformly potent pro-\emph{p} groups of large representation growth}
\label{sec:uniformly_potent_pro_p_groups_of_large_representation_growth}

\subsection*{General strategy}
\label{sub:general_strategy}
Building on \cref{thm:main}, we now set out to prove \cref{thm:diagonal on natural module} and \cref{thm:diagonal on symmetric square}. Let $m$ be soundly permissible for~$\mathfrak{sl}_2(\Op)$ and let ${\varphi \colon \SL_2^m(\Op) \to \GL_n^m(\Op)}$ be a faithful $\Op$-representation of $\SL_2^m(\Op)$ such that the semi-direct product $G=\SL_2^m(\Op)\ltimes V\cong\SL_2^m(\Op) \ltimes_\varphi \Op^n$ is FAb, where $V$ is the $\Op$-module defined by $\varphi$. We have to compute relative representation zeta functions of the form~$\zeta_{\SL_2^m(\Op)}^{G}(s)$. Let $\mathfrak{g} = \mathfrak{h} \oplus \mathfrak{v}$ be the $\Op$-Lie lattice such that $G=\exp(\mathfrak{p}^m\mathfrak{g})$ as in~\cref{thm:rel zeta integral}, where~$V = \exp(\mathfrak{p}^m\mathfrak{v})$ and~$H = \SL_2^m(\Op)=\exp(\mathfrak{p}^m\mathfrak{h})$. 
Using the terminology of~\cref{prop:integral formula-semidirect case}, we choose a basis~$\mathfrak{X}$ of~$\mathfrak{v}$ and a basis~$\mathfrak{Y}$ of~$\mathfrak{h}$. Additionally, we put~$\mathfrak{Z} = \mathfrak{X} \cup \mathfrak{Y}$. Given~$n \in \N$ and $i, j \in [n]$, the $n$-by-$n$ matrix over~$\Op$ with a single non-zero entry equal to~$1$ at the position~$(i,j)$ is denoted~$E^{n}_{i,j}$. We will make use of these notational conventions throughout this section and specialise as needed.

Since we have to specifically choose a basis for our computations, we represent the group $G$ as follows. As a basis for $\mathfrak{sl}_2(\Op)$, we fix the elements
\[
	\underline{h} = \begin{pmatrix}
		1 & 0 \\ 0 & -1
	\end{pmatrix}, \quad
	\underline{e} = \begin{pmatrix}
		0 & 1 \\ 0 & 0
	\end{pmatrix}, \quad \text{and} \quad
	\underline{f} = \begin{pmatrix}
		0 & 0 \\ 1 & 0
	\end{pmatrix},
\]
and we put $h = \underline{h}^\varphi, e = \underline{e}^\varphi$ and $f = \underline{f}^\varphi$.

\subsection*{Proofs of \cref{thm:diagonal on natural module} and \cref{thm:diagonal on symmetric square}} 
\label{sub:proofs}

\begin{proof}[Proof of \cref{thm:diagonal on natural module}]
	Here, $n \in \N$ and $\mathfrak{v} = \Op^{2n}$. The semi-direct product $G_n$ is due to the diagonal of the natural action of $\SL_2^m(\Op)$, which naturally extends to an action of~$\mathfrak{h}$ on~$\mathfrak{v}$ via~$x^\varphi = \diag(x, \dots, x)$ for all $x \in \mathfrak{h}$, and~$m$ is soundly permissible for~$\mathfrak{sl}_2(\Op)$. The natural basis for $\mathfrak{v}$ is given by $\mathfrak{X} = \{u_i \mid i \in [n]\} \cup \{v_i \mid i \in [n]\}$ with $u_i = E_{{2i-1, 2n+1}}^{2n+1}$ and $v_i = E_{{2i, 2n+1}}^{2n+1}$ for $i \in [n]$. It is easily verified that the natural action on $\Op^2 = \langle u_1, v_1\rangle$ is given by ${h.u_1 = u_1}, {h.v_1 = -v_1,} {e.u_1 = 0,} {e.v_1 = u_1,}{f.u_1 = v_1,}{ f.v_1 = 0}$, whence we need to consider the minors of the matrix
	\[
		\Com(\mathfrak{X}, \mathfrak{Y}) = \begin{pmatrix}
			u_1 & -v_1 &  u_2 & -v_2 & \cdots &  u_{n} & -v_{n} \\
			v_1 &  0   &  v_2 &  0   & \cdots &  v_{n} &  0      \\
			0   &  u_1 &  0   &  v_2 & \cdots &  0     &  u_{n}
		\end{pmatrix}^\intercal,
	\]
	which are, up to sign, given by
    \begin{gather*}
        \{1\} \,\cup\,
		\{a\mid a\in \mathfrak{X} \} \,\cup\,
		\{ab\mid a,b \in \mathfrak{X} \} \,\cup\,
		\{\det(i,j) \mid i, j \in [n]\}\\
		\cup\,\{a\det(i,j) \mid a \in \mathfrak{X}, i, j \in [n]\},
    \end{gather*}
    where we put~$\det(i, j) = u_{i} v_{j} - u_{j} v_{i}$. The maximal norm of the given elements is never exclusively attained at one of the monomials of degree~$1$. 
	Similarly, it is sufficient to consider only unmixed monomials of degree~$2$ and the minors of the form $a\det(i,j)$. Consequently, put
    \[
        P_1 = \{ x^2 \mid x \in \mathfrak{X}\}, \quad P_2 = \{a \det(i,j) \mid a \in \mathfrak{X}, i, j \in [n], i < j\};
    \]
    however, also the following set will be useful later,
    \[
       P_2' = \{\det(i,j) \mid i, j \in [n], i < j\}.
    \]
	We now partition the area of integration into convenient pieces. First, notice that the maximum $||\{1\} \cup P_1 \cup P_2||_{\p}$ is exclusively attained by $|1|_{\p}$ precisely on $\Op^{2n}$, whence
	\[
		\zeta_{\SL_2^m(\Op)}^{G_n}(s) = q^{2mn}\left(\int_{\Op^{2n}\mathfrak{X}} |1|_{\p}^{-s-1}\mathrm{d}\mu(w) + \int_{K^{2n}\mathfrak{X} \smallsetminus \Op^{2n}\mathfrak{X}} \|(P_1 \cup P_2)(w)\|_{\p}^{-s-1}\mathrm{d}\mu(w)\right).
	\]
	The left integral evaluates to $1$ by our choice of normalisation, while the area of integration of the right integral may be further disassembled into the sets $F_j = \pi^{-j}(\Op^{2n}\mathfrak{X} \smallsetminus \pi\Op^{2n}\mathfrak{X})$ for $j \in \N$. Here, the minimal valuation of the variables is constant and equal to $j$, whence we may normalise by a change of variables $w \mapsto \pi^j w$ and obtain
	\begin{align*}
		\int_{F_j} \|(P_1 \cup P_2)(w) \|_{\p}^{-s-1}\mathrm{d}\mu(w)
		&=
		|\pi^{2nj}|_{\p}\int_{F_0} \|\pi^{-2j}(P_1 \cup \pi^{-j}P_2)(w) \|_{\p}^{-s-1}\mathrm{d}\mu(w)\\
		&=
		(q^{n-1}t)^{2j}A(j),
	\end{align*}
	where 
    \begin{align*}
        A(j) =& \int_{F_0}\| (P_1 \cup \pi^{-j}P_2)(w)\|_{\p}^{-s-1} \mathrm{d}\mu(w) = \int_{F_0} \| \{ 1 \} \cup \pi^{-j} P_2'(w) \|_{\p}^{-s-1} \,\mathrm{d}\mu(w).
    \end{align*}
	Here, the second equality stems from the fact that on $F_0$ there is at least one invertible variable~$x$, whence the maximum attained on the set~$P_1$ is always equal to~$|x|_{\p} = |1|_{\p}$, and, by~$|a\det(i,j)|_{\p} \leq |x\det(i,j)|_{\p} = |\det(i,j)|_{\p}$, the maximum on $P_2$ equals the maximum on $P_2'$.
	
	We further partition the area of integration as follows. Put
	\[
		I_k = \pi\Op^{k-1}\mathfrak{X} \times \Op\mathfrak{X}\setminus \pi\Op\mathfrak{X} \times \Op^{2n-k}\mathfrak{X}
	\]
	for $k \in [2n]$. Naturally, $F_0$ is the union of all $I_k$ ranging over $k \in [2n]$. Assume that $k = 2i-1$ is odd. On $I_k$ substitute $u_{i}$ with $z_i = \det(i, j)$ for $j \in [n]\smallsetminus\{i\}$. For convenience, put $z_{i} = u_i$, for the substitution in $p$-adic integrals, see \cite[Proposition 7.4.1]{Igu00}. Clearly, the determinant of the corresponding Jacobian has norm~$1$. Furthermore, the set~$I_k$ is invariant under the transformation. The polynomial~$\det(l, j)$ for~$l, j \in [n]\smallsetminus\{i\}$, with~$l \neq j$, may be rewritten as
    \[
        \det(l, j) = u_{l} v_{j} - u_{j} v_{l} = u_i^{-1}(z_{j} v_{j} - z_i v_{l}).
    \]
	Since $v_j \in \Op$ for all $j \in [n]$, we find
	\[
		|\det(l,j)|_{\p} \leq \| \{z_l, z_j\} \|_{\p}.
	\]
	Put $\mathfrak{Z} = \{z_j \mid j \in [n] \smallsetminus \{i\}\}$ and $\mathfrak{X}' = \mathfrak{Z} \cup \{z_i\} \cup \{v_{j} \mid  j \in [n] \}$. The case of even~$k$ works analogously by transforming the variables $u_j$ etc. It remains to resolve the integral over $I_k$ for every $k \in [2n]$, i.e.\
	\[
		\mu(\pi\Op^{\lfloor k/2 \rfloor} \times \Op\smallsetminus \pi\Op) \int_{(\pi\Op^{\lceil k/2\rceil-1} \times \Op^{n-\lceil k/2 \rceil})\mathfrak{Z}} \|\{1\} \cup \pi^{-j}\mathfrak{Z}(w)\|_{\p} \mathrm{d}\mu(w).
	\]
	For odd $k$, we see that the second integral is equal to the one for $k+1$, whence we may concentrate on
	\[
		B_n(j,i) := \int_{(\pi\Op^{i-1} \times \Op^{n-i})\mathfrak{Z}} \|\{1\} \cup \pi^{-j}\mathfrak{Z}(w)\|_{\p} \mathrm{d}\mu(w)
	\]
	for $i \in [n]$, as
	\[
		A(j) = \sum_{i = 1}^{n} (1-q^{-1})(q^{-i} + q^{1-i}) B_n(j,i) = \sum_{i = 1}^{n} (1-q^{-2})q^{1-i} B_n(j,i).
	\]
    To evaluate $B_n(j,i)$, we integrate $\| \{1\} \cup \pi^{-j} \mathfrak{Z}(w) \|_{\p}^{-s-1}$ over $\Op^{n-1}\mathfrak{Z}$ and over the complement of $(\pi\Op^{i-1} \times \Op^{n-i}\mathfrak{Z})$ individually and take the difference; finding
    \begin{align*}
        \int_{\pi^j\Op^{n-1}\mathfrak{Z}} \|1\|_{\p}^{-s-1} \,\mathrm{d}\mu(w) &+ \sum_{r = 0}^{j-1} \int_{\pi^r(\Op^{n-1}\setminus \pi\Op^{n-1})\mathfrak{Z}} \| \pi^{-j} \mathfrak{Z}(w) \|_{\p}^{-s-1} \,\mathrm{d}\mu(w)\\
        &=q^{-j(n-1)}+\left( 1-q^{-(n-1)}\right)(q^{-1}t)^j \sum_{r=0}^{j-1} (q^{(2-n)}t^{-1})^r\\
        &=q^{-j(n-1)}\left(1+\left( 1-q^{-(n-1)}\right)\frac{1-(q^{(n-2)}t)^j}{1- q^{(n-2)}t}q^{n-2}t\right),
    \end{align*}
    and, more directly,
    \begin{align*}
        \int_{((\Op^{i-1}\setminus \pi\Op^{i-1} )\times \Op^{n-i})\mathfrak{Z}} \| \{1\} \cup \pi^{-j} \mathfrak{Z}(w) \|_{\p}^{-s-1} \,\mathrm{d}\mu(w)
        = (1-q^{1-i}) (q^{-1}t)^j.
    \end{align*}
    Thus we have 
    \[
        B_n(j,i) = q^{-j(n-1)}\left(1+\left( 1-q^{-(n-1)}\right)\frac{1-q^{j(n-2)}t^j}{1- q^{(n-2)}t}q^{n-2}t\right)-(1-q^{1-i}) (q^{-1}t)^j.
    \]
    Overall we find
    \begin{align*}
        A(j) &= q^{-j(n-1)} \left( \frac{(1-q^{-n})(1-q^{-2})}{1-q^{n-2}t}\right)\\
        &\quad \quad\quad\quad\quad -(q^{-1}t)^j \left( q^{-1}\frac{(1-q^{-n})(1-q^{1-n})(1+q^{n-1}t)}{1-q^{n-2}t}\right).
    \end{align*}
	Returning to the integral over $K^{2n}\smallsetminus\Op^{2n}$, we find its value to as
    \begin{align*}
        \sum_{j = 1}^{\infty} (q^{(n-1)}t)^{2j} A(j)
        &= (1-q^{-n})\frac{q^2+q^3+(q^{n+1}-q^2)t-(1+q^n)q^nt^3}{(1-q^{(n-1)}t^{2})(1-q^{(2n-3)}t^{3})}q^{n-4}t^2.
    \end{align*}
    In consequence, the full representation zeta function of~$G_n$ equals
    \[
        \zeta_{G_n}(s) = q^{2nm}\frac{(1-(1+q-q^{n})t^2-(1-q^{n-1}+q^{n})qt^3+q^{n+1}t^5)}{(1-q^{(n+1)}t^{2})(1-q^{2n}t^{3})} \zeta_{\SL_2^m(\Op)(s)}.\qedhere
    \]
\end{proof}

\begin{proof}[Proof of \cref{thm:diagonal on symmetric square}]
	We proceed similarly to the previous proof, whence we describe the computation more concisely. In the present case, $n \in \N$, $V_n = \Sym^2(\mathfrak{p}^m\Op^2)^n \cong \Op^{3n}$, $G_n=\SL_2^m(\Op)\ltimes V_n$, and $m$ soundly permissible for $\mathfrak{sl}_2(\Op)$. The action of $\SL_2^m(\Op)$ on the symmetric square yields the following blocks for the block diagonal matrices $h = \diag(h_1, \dots, h_1), e = \diag(e_1, \dots, e_1)$ and $f = \diag(f_1, \dots, f_1)$ representing $\mathfrak{h}$, 
	\[
		h_1 = \begin{pmatrix}
			2 & 0 & 0 \\ 0 & 0 & 0 \\ 0 & 0 & -2
		\end{pmatrix}, \quad
		e_1 = \begin{pmatrix}
			0 & 2 & 0 \\ 0 & 0 & 1 \\ 0 & 0 & 0
		\end{pmatrix}, \quad \text{and}\quad
		f_1 = \begin{pmatrix}
			0 & 0 & 0 \\ 1 & 0 & 0 \\ 0 & 2 & 0
		\end{pmatrix}.
	\]
	It is useful to write $u_i = E_{3i-2, 3n+1}^{3n+1}, v_i = E_{3i-1, 3n+1}^{3n+1}$ and $w_i = E_{3i, 3n+1}^{3n+1}$, with $i \in [n]$, for the $3n$ elements of $\mathfrak{X}$. We compute the block row matrix
	\[
		\Com(\mathfrak{g}, \mathfrak{X}, \mathfrak{Y}) = \begin{pmatrix}
		2u_1 & 0   & -2w_1 & \cdots & 2u_n & 0   & -2w_n \\
		2v_1 & w_1 &  0    & \cdots & 2v_n & w_n &  0    \\
		0    & u_1 &  2v_1 & \cdots & 0    & u_n &  2v_n
		\end{pmatrix}^\intercal.
	\]
	Up to sign and powers of $2$, which, since $p$ is odd by assumption, do not influence the norm, the set of minors is given by
    \begin{gather*}
   		\{ab \mid a,b \in \mathfrak{X} \cup \{1\}\} \cup
		\{ u_iv_jw_l-u_lv_iw_j \mid i, j, k \in [n] \}\\ \cup\,
		\{a \det(i,j; r, s) \mid a \in \mathfrak{X} \cup \{1\}, i, j \in [n], r, s \in \{u,v,w\}\},
    \end{gather*}
    where~$\det(i,j; r, s) = r_is_j - r_js_i$. 
    Exactly as in the previous proof, the monomials of degree one, the mixed monomials, and the polynomials of the form $\det(i,j; r,s)$ without factor do not play any role in the computation of the representation zeta function. Write $P$ for the set of minors above, excluding the unmixed monomials of degree~$2$. Partitioning $K^{3n}$ into
    \[
        K^{3n}=\Op^{3n}\cup\bigcup_{j=1}^\infty \pi^{-j}(\Op^{3n})^\times,
    \] 
    and transforming the latter parts by multiplication with $\pi^j$, we obtain
    \begin{align*}
        \zeta_{\SL_2^m(\Op)}^{G_n}(s)&=
        q^{3mn} \left( 1 + \sum_{j=1}^{\infty} \int_{\pi^{-j}(\Op^{3n})^\times\mathfrak{X}}\|(\{x^2 \mid x \in \mathfrak{X}\}\cup P)(w)\|_{\p}^{-s-1} \mathrm{d}\mu(w)\right)\\
        &=q^{3mn} \left( 1 + \sum_{j=1}^{\infty} q^{j(3n-2)}t^{2j} C(j)\right),
    \end{align*}
    with
    \begin{align*}
        C(j) = &\int_{(\Op^{3n})^\times\mathfrak{X}}\|(\mathfrak{X}^2\cup \pi^{-j}P)(w)\|_{\p}^{-s-1} \mathrm{d}\mu(w)\\
        =&\int_{(\Op^{3n})^\times\mathfrak{X}}\|\{1\}\cup \pi^{-j}P(w) \|_{\p}^{-s-1} \mathrm{d}\mu(w).
    \end{align*}
    We further partition the domain of integration in to~$n$ parts by isolating the first triple of variables~$(u_{k},v_{k},w_{k})$ for~$k \in [n]$ containing an invertible, i.e.\ into
    \[
        I_k = (\pi\Op^{3(k-1)}
        \times(\Op^{3})^\times\times \Op^{3(n-k)})\mathfrak{X}
    \]
    for $k \in [n]$. Consider the integral over $I_k$ and assume without loss of generality that~$u_{k}$ is invertible. For every~$a \in \mathfrak{X}$, every $i,j \in [n]$ with~$i \neq j$ and every $r,s \in \{u,v,w\}$ with~$i \neq j$, we have for all $w \in I_k$
    \begin{align*}
		| a \det(i,j; r, s)(w)|_{\p} \leq
		| \det(i,j; r,s)(w) |_{\p}.
    \end{align*}
    Perform the (linear) change of variables $r_i \mapsto \tilde{r}_i = \det(k, i; u, r)$ for $i \in [n]\smallsetminus\{k\}$ and $r \in \{v, w\}$. The area of integration is invariant and the determinant of the Jacobian is invertible. In rewriting our integrand, we find it to be the minimum of the norms of the functions
    \begin{align*}
        &\tilde{v}_i,\tilde{w}_i, \det(v_k,\tilde{w}_j),\det(u_i,\tilde{v}_j),\det(u_i,\tilde{w}_j),\det(\tilde{v}_i,\tilde{w}_j), \\
        &u_i\tilde{v}_j\tilde{w}_l-u_l\tilde{v}_i\tilde{w}_j+u_iu_j\det(v_k,\tilde{w}_l)-u_iu_l\det(v_k,\tilde{w}_j),
    \end{align*}
    where~$i,j,l\in[n]\setminus\{k\}$ are pairwise different. A comparison of norms shows that the only relevant polynomials are the monomials~$\tilde{v_i}$ and~$\tilde{w}_i$ with~$i\in [n]\setminus\{k\}$. Consequently, we may consider a new set of variables~$\mathfrak{Z}=\{z_1,\dots,z_{2(n-1)}\}$, yielding 
    \begin{align*}
		C(j) &= (1-\frac{1}{q^3}) \sum_{k=1}^n\frac{1}{q^{k-1}}\int_{(\pi\Op^{2(k-1)}\times \Op^{2(n-k))}\mathfrak{Z}} \|\{1\}\cup \pi^{-j}\mathfrak{Z}(w)\|_{\p}^{-s-1} \mathrm{d}\mu(w)\\
        &=(1-\frac{1}{q^3}) \sum_{k=0}^{n-1}\frac{1}{q^{k}}B_{2n-1}(j,2k+1),
    \end{align*}
    where~$B_n(j,i)$ is the expression defined and evaluated in the proof of \cref{thm:diagonal on natural module}. Thus, by routine calculations we get
    \begin{align*}
        C(j) =& q^{-2j(n-1)}\frac{(1-q^{-3})(1-q^{-n})(1-q^{-1}t)}{(1-q^{-1})(1 -q^{2n-3}t)}\\ &-q^{-j}t^j(1-q^{-n})(1-q^{1-n})\left(q^{-1}\frac{(1+q^{-1}+q^{-n})+(1+q+q^{n})q^{n-2}t}{1 -q^{2n-3}t}\right),
    \end{align*}
    leading to
	\[
		\zeta_{\SL_2^m(\Op)}^{G_{n}}(s) = q^{3nm-2}\frac{(q-t)(1-t)(t+q^{n}t^2+q(1+t)(1+q^{n-2}t^2))}{(1-q^{3n-3}t^3)(1-q^nt^2)},
	\]
	and the claimed expression for the full representation zeta function.
	
	Finally, we consider the representation zeta function of the semi-direct product~$G= \SL_2^m(\Z_2) \ltimes \Sym^2(\Z_2^2)$ for~$m\in \mathbb{N}_{\geqslant 2}$. Following along the computation in the odd case, the divergence in the computation follows from the need to keep track of the powers of $2$ dividing the minors of
	\[
		\begin{pmatrix}
			2u & 0 & -2w \\
			2v & w & 0   \\
			0  & u & 2v
		\end{pmatrix};
	\]
	the (relevant) minors are given by $\{ 1, 2u^2, 4v^2, 2w^2\}$, here one uses the fact that $||1, u, 2u^2||_2 = |1, 2u^2|_2$. The transformation $v \mapsto 2v$ yields the integral
	\begin{align*}
		\zeta_{\SL_2^m(\Z_2)}^{G}(s)&=
		2^{3m+1} \left( 1 + \sum_{j=1}^{\infty} \int_{2^{-j}(\Z_2^{3})^\times \mathfrak{X}}\|(\{1,2u^2,v^2,2w^2\})(w)\|_{2}^{-s-1} \mathrm{d}\mu(w)\right)\\
		&= 2^{3m+1}\left(1 + \sum_{j=1}^\infty 2^{3j}(1-2^{-2})2^{(2j-1)(-1-s)} + (1-2^{-1})2^{2j(-1-s)-2}\right)\\
		&= 2^{3m-1}\frac{(2-t)(2^3t+(2-t))}{1-2t^2},
	\end{align*}
	whence the zeta function of~$G$ is given by the described rational function.

    The discrepancy between the odd and the even case is due to the presence of the prime $2$ among the structure constants of the considered Lie lattice. More generally, for a Lie lattice, the structure constants involve only finitely many integers, so discrepancies can occur at only finitely many primes.
\end{proof}


\subsection*{Further discussion of the results} 
\label{sub:further_discussion_of_the_results}

\begin{enumerate}
	\item One readily observes that the computation of the irreducible cases $n = 1$ in the last two theorems do not require any argument past the standard decomposition of $\Q_p^d$, since there are no non-trivial $3\times3$-minors of the matrices~$A$.
	\item For any algebraic counting problem giving rise to a well-behaved, uniform (i.e.\ only polynomially dependent on the prime $p$) zeta function $\zeta(s) = W(p, p^{-s})$, one may consider its \emph{reduced zeta function} $\zeta^{\mathrm{red}}(s) := W(1, T)$, see \cite{Evs09}. From our formulae one derives that
	\[
	\zeta^{\mathrm{red}}_{G}(s) = \zeta^{\mathrm{red}}_{\SL_2^m(\Op)}(s) = 1
	\]
	for $G = \SL_2^m(\Op)\ltimes\Op^{2n}$ and $G = \SL_2^m(\Op)\ltimes\Sym^2(\Op^{2})^n$, for any $n$.
	\item The \emph{topological zeta functions} related to our examples, see \cite{Ross15} for a definition, are
	\[
	\zeta_{G}^{\mathrm{top}}(s) =
	\frac{s(s+2)(6s-3n-1)}{(s-1)(2s-n-1)(3s-2n)}
	\]
	for $G = \SL_2^m(\Op)\ltimes\Op^{2n}$ and
	\[
	\zeta_{G}^{\mathrm{top}}(s) =
	\frac{s(s+2)(3s-n-1)}{(2s-n-2)(s-n)}
	\]
    for $G = \SL_2^m(\Op)\ltimes\Sym^2(\Op^{2})^n$.
	\item In both cases, the variety defined by the degree-$3$ minors over $\F_q$ can be shown to be smooth away from the singular origin. As a consequence, an alternative computation may be carried out by the determination of the size of the zero locus, making use of Igusa's stationary phase formula, c.f.\  \cite{Igu94}.
	\item The relative simplicity of the computations is deceptive: already the natural next example $\Sym^3(\Op^2)$ is highly difficult to compute, reflecting the fact that the zero locus is everywhere singular. Similarly, the mixed representation $\Sym^2(\Op^2) \times \Op^2$ requires a much more involved computation due the non-triviality of the singular locus.
\end{enumerate}


\section{A non-thetyspectral subgroup} 
\label{sec:a_non_thetyspectral_subgroup}

Finally, as a counterpart to \cref{cor:subgroups} and \cref{prop:thethys in sl3}, we construct a subgroup of $G = \SL_2^1(\Op) \ltimes \Op^2$ that is not thetyspectral, where, for the sake of simplicity, we consider $\Op$ to be an unramified extension of~$\mathbb{Z}_p$ and residue class field of cardinality~$q$, where $q$ is a power of an odd prime $p$. In the general case (but still assuming~$p$ odd, otherwise we would also need to take into account the structure constants), we could perform similar computations with an additional parameter $m$ to ensure soundly permissibility of the $\Op$-Lie lattice.
Let $\chi_k$ be the irreducible representation of $\Op^2 = \langle u, v \rangle$ that maps $u$ to a primitive $p^k$\textsuperscript{th} root of unity, with $k \in \N$, and~$v$ to~$1$. The stabiliser~$S_k = \Stab_{\SL_2^1(\Op)}(\chi_k)$ is, according to \cref{lem:potent point stabs}, a uniformly potent subgroup of $\SL_2^1(\Op)$. Put $H_k = S_k \ltimes \Op^2$, which is a uniformly potent group by \cref{lem:semidirect potent}.

Consider the~$\Op$-Lie~lattice~$\mathfrak{g}$ generated by the elements~${h}, {e}, {f}, {u},$ and~$ {v}$ as in the previous section; such that $G = \exp(p\mathfrak{g})$. Let $\mathfrak{s}_k$ be the~$\Op$-Lie sublattice of~$\mathfrak{g}$ generated by the elements~$p^{k}{h}, {e}$, and~$p^{k} {f}$ and let $\mathfrak{h}_k$ be generated by the elements~$p^{k} {h}, {e}, p^{k} {f}, {u}$, and~${v}$. Then $S_k = \exp(p \mathfrak{s}_k)$ and $H_k = \exp(p\mathfrak{h}_k)$. For simplicity, we assume that~$p$ is odd.

\begin{proposition}\label{prop:non-thety}
	For every $k \neq 0$, the finite-index subgroup $H_k \leq \SL_2^1(\Op) \ltimes \Op^2$ is not thetyspectral and its representation zeta function is given by
	\[
		\zeta_{H_k}(s) = q^{2k+2}\frac{(1-t)((1-(qt)^{k+1}) + qt^2(1-(qt)^{k-1})}{(1-qt)^2(1+qt)}
		\cdot \zeta_{\SL_2^1(\Op)}(s).
	\]
\end{proposition}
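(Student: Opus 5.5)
The plan is to mimic the proof of \cref{thm:main}, now for $H_k = S_k \ltimes \Op^2$ with $S_k$ in the role of $H$, and then reduce everything to one relative zeta function. The orbit description of \cref{prop:reps of sd prod} and the counting in the proof of \cref{thm:main} go through for any semi-direct product whose point stabilisers are uniformly potent. Here the point stabilisers $\Stab_{S_k}(\chi)$ are intersections of $S_k$ with stabilisers in $\GL_2^1(\Op)$, so they are uniformly potent by \cref{lem:potent point stabs} and \cref{lem:potent intersection}. They are open in $S_k$, hence open potent sublattices of the $3$-dimensional simple lattice $\mathfrak{sl}_2(\Op)$. \cref{cor:subgroups} then gives $\zeta_{\Stab_{S_k}(\chi)}(s) = |\SL_2^1(\Op):\Stab_{S_k}(\chi)|\,\zeta_{\SL_2^1(\Op)}(s)$, and likewise $\zeta_{S_k}(s)=|\SL_2^1(\Op):S_k|\,\zeta_{\SL_2^1(\Op)}(s) = q^{2k}\zeta_{\SL_2^1(\Op)}(s)$. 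The conclusion of \cref{thm:main} therefore holds verbatim:
\[
\zeta_{H_k}(s) = \zeta_{S_k}(s)\,\zeta^{H_k}_{S_k}(s-1) = q^{2k}\,\zeta_{\SL_2^1(\Op)}(s)\,\zeta^{H_k}_{S_k}(s-1).
\]
It remains to compute the relative zeta function $\zeta^{H_k}_{S_k}$. I would do this with \cref{prop:integral formula-semidirect case}, applied with $m=1$, $\mathfrak{X}=\{u,v\}$ and $\mathfrak{Y}=\{p^k h, e, p^k f\}$.

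From the natural action ($h.u=u$, $h.v=-v$, $e.v=u$, $f.u=v$), the commutator matrix $\Com(\mathfrak{X},\mathfrak{Y})$ has rows $(p^k u,\, 0,\, p^k v)$ and $(-p^k v,\, u,\, 0)$, up to signs. Its minors are $1$; the entries $p^ku$, $p^kv$, $u$; and the $2\times2$ minors $p^k u^2$, $p^{2k}v^2$, $p^k uv$ (up to sign and units). So the integrand is $\lVert\{1, u, p^kv, p^ku^2, p^{2k}v^2\}\rVert_{\p}^{-1-s}$ over $K^2$. The term $p^kuv$ is dominated by the larger of $p^ku^2$ and $p^{2k}v^2$ in the ultrametric norm. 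The change of variables $v\mapsto p^{-k}v$ contributes a Jacobian factor $q^{k}$ and turns the integrand into
\[
\lVert\{1,u,v,p^ku^2,v^2\}\rVert_{\p}^{-1-s},
\]
which is an asymmetric version of the $n=1$ case of \cref{thm:diagonal on natural module}.

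Next I would split $K^2$ by the valuations $a=-v_{\p}(u)$ and $b=-v_{\p}(v)$. Where both are $\le 0$ the integrand is $1$. Otherwise the norm is $\max\{q^{a}, q^{b}, q^{2a-k}, q^{2b}\}$, truncated at $1$. The shells have measures $(1-q^{-1})q^{a}$ and so on. I would sum the resulting geometric series in the cases $b\geq 1$ and $b\le 0$, subdividing each by the comparison of $2a-k$ with $a$ and with $2b$. The breakpoint at $a=k$, where $p^ku^2$ begins to dominate $u$, is what produces the truncated sums $1-(qt)^{k+1}$ and $1-(qt)^{k-1}$ in the claimed formula, with $t=q^{-s}$ after the shift $s\mapsto s-1$. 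Multiplying by $q^{mn}=q^{2}$, and taking care with the normalisation of $\mu$ relative to $\mathfrak{h}_k$, should give the prefactor $q^{2k+2}$ together with the rational function.

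Non-thetyspectrality then follows by comparing with \cref{thm:diagonal on natural module} at $n=1$, $m=1$. There the cofactor is $q^2(1+t)f(q,t)/((1-q^2t^2)(1-q^2t^3))$. Its poles include the roots of $1-q^2t^3$, whereas the new cofactor has poles only at $t=q^{-1}$ and $t=-q^{-1}$. So the quotient of the two zeta functions is not constant.

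I expect the main obstacle to be the bookkeeping in the piecewise geometric sums: the case distinctions around $a\le k$ versus $a>k$, and the boundary cases when $k=1$. I would first compute the case $k=1$ by hand to check the signs before doing the general computation.
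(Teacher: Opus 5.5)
You follow the paper's route. \cref{thm:main} and \cref{cor:subgroups} reduce everything to $\zeta^{H_k}_{S_k}(s-1)$, which is then evaluated with \cref{prop:integral formula-semidirect case}, and your list of minors matches the paper's once $u$ and $v$ are interchanged.

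The genuine gap is the step where you discard $p^kuv$. It is not dominated by $\max\{|p^ku^2|_{\p},|p^{2k}v^2|_{\p}\}$, because $|p^kuv|_{\p}^2=q^{k}\,|p^ku^2|_{\p}\,|p^{2k}v^2|_{\p}$. For example, take $k=2$, $|u|_{\p}=q^3$ and $|v|_{\p}=q^4$. Then $|p^2uv|_{\p}=q^5$, while every other minor has norm at most $q^4$. After your substitution this term becomes $uv$, which is the strict maximum on $\{1<|v|_{\p}<|u|_{\p}<q^k|v|_{\p}\}$. That set has positive measure for every $k\geq 2$. It is the region the paper treats as $A_3$ (in its variables, where $|uv|_{\p}$ is maximal), contributing $(1-q^{-1})^2(1-t^k)/((1-t)(1-t^2))$. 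Integrating $\lVert\{1,u,v,p^ku^2,v^2\}\rVert_{\p}^{-1-s}$ therefore gives the wrong rational function for all $k\geq 2$. Your planned check at $k=1$ cannot catch this, since the region is empty when $k=1$. You must keep $uv$ and handle the three-way comparison of $uv$, $p^ku^2$ and $v^2$.

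Do not bend your bookkeeping towards the displayed formula. Your own ingredients are $|\SL_2^1(\Op):S_k|=q^{2k}$ (correct for $\mathfrak{s}_k=\langle p^kh,e,p^kf\rangle$), $q^{mn}=q^2$, and the Jacobian $q^k$, with $\mu$ fixed by $\mu(\Op u\oplus\Op v)=1$. Together they force the prefactor $q^{3k+2}$, and no normalisation subtlety turns this into $q^{2k+2}$. The paper gets $q^{2k+2}$ only by using the index $q^k$. The value $q^{3k+2}$ is also what the constant term requires: $\zeta_{\SL_2^1(\Op)}$ has constant term $q^3$, while $r_1(H_k)=|p\mathfrak{h}_k:[p\mathfrak{h}_k,p\mathfrak{h}_k]|=q^{3k+5}$.

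The sign needs the same care. The paper's five region integrals sum to $q^{-2}(q-t)\bigl(q(1-t^{k+1})-t^2(1-t^{k-1})\bigr)/\bigl((1-t)^2(1+t)\bigr)$, with a minus where the displayed formula has a plus.

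For non-thetyspectrality, compare with the case $k=0$, as the paper does. For $n=1$ there are no $3\times3$ minors, so the correct cofactor is $q^2(1-t^2)/(1-q^2t^2)$. The poles at the roots of $1-q^2t^3$ that you invoke come from the printed $n=1$ specialisation of \cref{thm:diagonal on natural module} and do not exist. Your conclusion still holds, because the quotient $\zeta_{H_k}(s)/\zeta_{\SL_2^1(\Op)\ltimes\Op^2}(s)$ has a pole at $t=-1$ and so is not constant.
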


\begin{proof}
	By \cref{thm:main} and \cref{cor:subgroups}, we find
    \[
        \zeta_{H_k}(s) = \zeta_{S_k}^{H_k}(s-1) \cdot \zeta_{S_k}(s) = |\SL_2^1(\Op):H_k| \cdot \zeta_{S_k}^{H_k}(s-1) \cdot \zeta_{\SL_2^1(\Op)}(s).
    \]
    In view of the description of its Lie lattice, the index of~$S_k$ in~$\SL_2^1(\Op)$ is easily seen to be~$q^{k}$. It remains to calculate $\zeta_{S_k}^{H_k}$ following the procedure laid out in the previous section. The partial commutator matrix is given by
    \[
        \begin{pmatrix}
			p^{k}u & -p^{k}v 	\\
				 v &  0 		\\
				 0 &  p^{k}u
        \end{pmatrix}
    \]
    and the set of minors (up to sign) is
    \[
        \operatorname{Min}(A) = \left\{ 1, p^{k}u, v, p^{k} v, p^{k}uv, p^{2k}u^2, p^{k}v^2\right\}.
    \]
    After the transformation~$u \mapsto p^{k} u$, the integral describing $\zeta_{S_k}^{H_k}$ is
    \[
        \zeta_{S_k}^{H_k}(s) = q^{k+2} \int_{\Q_p^2 \mathfrak{X}} \lVert (\{1, u, v, uv, u^2, p^{k} v^2\})(w) \rVert_{\p}^{-1-s} \mathrm{d}\mu(w).
    \]
    The polynomial $u$ is irrelevant. Comparing the valuations of the remaining polynomials, we determine that the maximum of the norm is reached by~$1$ in the area~$\Op^2$, by~$u^2$ within~$\bigcup_{m \in \N_0} p^{-m}\Op^\times \times p^{-m}\Op$, by~$uv$ within~$\bigcup_{m \in \N_0} p^{-m}\Op^\times \times p^{-m-k}\Op \smallsetminus p^{-m+1}\Op$, by~$p^k v^2$ within~$\bigcup_{m \in \N_0} p^{-m}\Op \times p^{-m-k}\Op^\times$, and by~$v$ in the area~$\Op \times (p^{-k}\Op \smallsetminus p\Op)$. 

    These areas overlap. We cut up~$K^2$ into areas on which a fixed polynomial has maximal norm, see~\cref{fig:divisions} for comparison, such that the full integral is the sum of the integrals over the parts.
    \begin{figure}[ht]
	\centering
	\begin{tikzpicture}
		\draw[thick,-Triangle] (-4,-2) -- (3,-2);
			 \draw (0,-1.9) -- (0,-2.2) node[anchor=north] {0};
			 \draw (-2,-1.9) -- (-2,-2.2) node[anchor=north] {$-k$};
			 \draw (3,-2) node[anchor=north] {$\nu(v)$};

		\draw[thick,-Triangle] (-4,-2) -- (-4,2);
			\draw (-3.9,0) -- (-4.2,0) node[anchor=east] {0};
			\draw (-4,2) node[anchor=east] {$\nu(u)$};
			
		\foreach \x in {0,.5,...,2.5}
			{\foreach \y in {0,.5,...,1.5}
				{\filldraw[fill=gray!40] (\x,\y) circle (6pt);}}
	
		\foreach \y in {0,-1,...,-3}
			{\foreach \x in {\y, ..., 5}
				{\draw[pattern={north west lines}] (0.5*\x,0.5*\y) circle (6pt);}}

		\foreach \x in {0,-1,...,-3}
			{\foreach \y in {\x,...,3}
				{\draw[pattern={north east lines}] (0.5*\x-2, 0.5*\y) circle (6pt);}}

		\foreach \x in {0,...,4}
			{\foreach \y in {0,...,3}
				{\draw[gray,ultra thick] (.5*\x-2, .5*\y) circle (5.6pt);
				\draw (.5*\x-2, .5*\y) circle (6pt);}}
	
		\foreach \x in {0,...,4}
			{\foreach \y in {0,...,3}
				{\draw (.5*\x-2-.5*\y, -.5*\y) circle (6pt);
				\filldraw[fill=gray] (.5*\x-2-.5*\y, -.5*\y) circle (2pt);}}

	\draw[thick, red] (-1.75,2) -- (-1.75,.25) -- (3,.25);
	\draw[rounded corners, thick, red] (-1.75,.5) -- (-1.75,-.25) -- (-2.25,-.25) -- (-2.25,-.75) -- (-2.75,-.75) -- (-2.75,-1.25) -- (-3.25,-1.25) -- (-3.25,-1.75) -- (-3.75,-1.75);

	\draw[rounded corners, thick, red, shift={(2,0)}] (-1.75,2) -- (-1.75,-.25) -- (-2.25,-.25) -- (-2.25,-.75) -- (-2.75,-.75) -- (-2.75,-1.25) -- (-3.25,-1.25) -- (-3.25,-1.75) -- (-3.75,-1.75);

	\draw (1.5,2.5) node[anchor=north] {$A_1$};
	\draw (3,-.75) node[anchor=west] {$A_2$};
	\draw (-2.75,2.5) node[anchor=north] {$A_4$};
	\draw (-.75,2.5) node[anchor=north] {$A_5$};	

    \end{tikzpicture}
	\begin{tikzpicture}
		\draw[pattern={north east lines}] (-3.5, -4.25) circle (6.5pt);
		\draw (-3.20, -4.25) node[anchor=west] {$|p^kv^2|_{p}$ maximal};
		\draw[pattern={north west lines}] (.5, -4.25) circle (6.5pt);
		\draw (.70, -4.25) node[anchor=west] {$|u^2|_{p}$ maximal};
		\filldraw[fill=gray!40] (4, -4.25) circle (6.5pt);
		\draw (4.20, -4.25) node[anchor=west] {$|1|_{p}$ maximal};
	\end{tikzpicture}
	\begin{tikzpicture}
		\draw (0,0) circle (6.5pt);
		\filldraw[fill=gray] (0,0) circle (2pt);
		\draw (.30, 0) node[anchor=west] {$|uv|_{p}$ maximal};
		\draw[gray,ultra thick] (4,0) circle (6.1pt);
		\draw (4,0) circle (6.5pt);
		\draw (4.30, 0) node[anchor=west] {$|v|_{p}$ maximal};
	\end{tikzpicture}
	\caption{A sketch of the partition of~$K^2$ used in the proof of \cref{prop:non-thety}, depicting the case~$k = 4$. Every circle represents a set of fixed valuation vector, i.e.\ of the form~$p^a\Op^\times \times p^b\Op^\times$ of~$K^2$.}\label{fig:divisions}
    \end{figure}
    
   \noindent Area~$A_1$, where~$|1|_{p}$ is maximal, is defined as~$p\Op^2$, hence
    \[
        \int_{A_1} |1|_{p}^{-1-s} \mathrm{d}\mu(w) = q^{-2}.
    \]
    Area~$A_2$ is defined as~$\bigcup_{j \in \N_0} p^{-j}\Op^\times \times p^{-j+1}\Op$, such that
    \begin{align*}
        \int_{A_2} |(u^2)(w)|_{p}^{-1-s} \mathrm{d}\mu(w) &= \sum_{j = 0}^{\infty} \int_{p^{-j}\Op^\times \times p^{-j+1}\Op} |u^2(w)|_{p}^{-1-s} \mathrm{d}\mu(w)\\
        &= q^{-1}\frac{(1-q^{-1})}{1-t^{2}}.
    \end{align*}
    We put \[A_3 = \bigcup_{j \in \N_0} p^{-j}(\Op^\times \times p(p^{-k}\Op \smallsetminus \Op)),\] and find
    \begin{align*}
        \int_{A_3} |uv(w)|_{p}^{-1-s} \mathrm{d}\mu(w) &=
		\sum_{j = 0}^\infty
			(1-q^{-1})t^j
			\sum_{l = 0}^{k-1} 
			\int_{p^{-j-l}\Op^\times} |v(w)|_{p}^{-1-s} \mathrm{d}\mu(v)
    \end{align*}
    by separation of the variables. The evaluation of the remaining sum yields
    \begin{align*}
        \sum_{l = 0}^{k-1} (1-q^{-1})t^{j+l}
        = (1-q^{-1})t^{j} \frac{1-t^{k}}{1-t}.
    \end{align*}
    Thus the integral over~$A_3$ evaluates to
    \begin{align*}
		(1-q^{-1})^2 \frac{1-t^k}{(1-t)(1-t^2)}.
    \end{align*}
    The fourth area is defined by~$A_4 = \bigcup_{j \in \N_0} p^{-j}(\Op \times p^{-k}\Op^\times)$, and we compute
    \begin{align*}
        \int_{A_4} |(p^kv^2)(w)|_{p}^{-1-s} \mathrm{d}\mu(w)
        &=\frac{(1-q^{-1})t^k}{1-t^{2}}.
    \end{align*}
    Finally, the last area~$A_5 = p(\Op \times (p^{-k}\Op \smallsetminus \Op))$ contributes with
    \begin{align*}
        \int_{A_5} |(v)(w)|_{p}^{-1-s} \mathrm{d}\mu(w) = q^{-1} \sum_{l = 0}^{k-1} \int_{p^{-l}\Op^\times} |(v)(w)|_{p}^{-1-s} \mathrm{d}\mu(w)
        &= q^{-1}(1-q^{-1})\, \frac{1-t^{k}}{1-t}.
    \end{align*}
    The sum of the five integrals calculated above factorises as
    \[
        \zeta_{S_k}^{H_k}(s) = q^{k}\frac{(q-t)(q(1-t^{k+1}) + t^2(1-t^{k-1})}{(1-t)^2(1+t)},
    \]
    and overall, we have
    \begin{align*}
        \zeta_{H_k}(s) &= q^{2k+2}\frac{(1-t)((1-(qt)^{k+1}) + qt^2(1-(qt)^{k-1})}{(1-qt)^2(1+qt)}
		\cdot \zeta_{\SL_2^1(\Op)}(s).
    \end{align*}
	Note that, depending on $k \bmod 2$, the numerator is divisible by $(1-qt)$ and $(1-(qt)^2)$, respectively. For $k = 0$, we recover the relative representation zeta function computed as the case $n=1$ in \cref{thm:diagonal on natural module}.
	
    Thus, the subgroup~$H_k$ of~$G$ is not thetyspectral. Indeed, the quotient of the respective zeta functions is
    \[
        \frac{\zeta_{H_k}(s)}{\zeta_{G}(s)} = q^{2k} 
		\frac{((1-(qt)^{k+1}) + qt^2(1-(qt)^{k-1})}
		{(1+t)(1-qt)}.
    \]
	Again, the numerator is in fact divisible by $(1-qt)$.
\end{proof}


\bibliographystyle{plain}
\bibliography{\jobname}

\end{document}